\theoremstyle{plain}% default
\newtheorem{thm}{Theorem}[section]
\newtheorem{lem}[thm]{Lemma}
\newtheorem{cor}[thm]{Corollary}
\theoremstyle{definition}
\newtheorem{defn}{Definition}[section]
\theoremstyle{remark}
\title{Bogomolov multipliers for some $p$-groups of nilpotency class $2$}
\author{Ivo M. Michailov}
\address{Faculty of Mathematics and Informatics, Shumen University "Episkop Konstantin Preslavski", Universitetska str. 115, 9700 Shumen, Bulgaria}
\email{ivo\_michailov@yahoo.com}
\date{\today}
\keywords{Bogomolov multiplier, Noether's problem, Rationality
problem} \subjclass[2000]{primary 14E08, 14M20; secondary
13A50,12F12}
\thanks{This work is partially supported by a project No RD-08-241/12.03.2013 of Shumen University}
\begin{document}
\baselineskip 15pt
\begin{abstract}
The Bogomolov multiplier $B_0(G)$ of a finite group $G$ is defined
as the subgroup of the Schur multiplier consisting of the cohomology
classes vanishing after restriction to all abelian subgroups of $G$.
The triviality of the Bogomolov multiplier is an obstruction to
Noether's problem. We show that if $G$ is a central product of $G_1$
and $G_2$, regarding $K_i\leq Z(G_i), i=1,2$, and $\theta:G_1\to
G_2$ is a group homomorphism such that its restriction
$\theta\vert_{K_1}:K_1\to K_2$ is an isomorphism, then the
triviality of $B_0(G_1/K_1), B_0(G_1)$ and $B_0(G_2)$ implies the
triviality of $B_0(G)$. We give a positive answer to Noether's problem for all $2$-generator
$p$-groups of nilpotency class $2$, and for one series of
$4$-generator $p$-groups of nilpotency class $2$ (with the usual requirement for the roots of
unity).
\end{abstract}

\maketitle
\newcommand{\Gal}{{\rm Gal}}
\newcommand{\im}{{\rm im}}
\newcommand{\res}{{\rm res}}
\newcommand{\GL}{{\rm GL}}
\newcommand{\Br}{{\rm Br}}
\newcommand{\lcm}{{\rm lcm}}
\newcommand{\ord}{{\rm ord}}
\renewcommand{\thefootnote}{\fnsymbol{footnote}}
\numberwithin{equation}{section}

%--------------------------------------Section 1-------------------------------------------------
\section{Introduction}
\label{1}

Let $K$ be a field, $G$ a finite group and $V$ a faithful
representation of $G$ over $K$. Then there is a natural action of
$G$ upon the field of rational functions $K(V)$. \emph{The
rationality problem} (also known as \emph{Noether's problem} when
$G$ acts on $V$ by permutations) then asks whether the field of
$G$-invariant functions $K(V)^G$ is rational (i.e., purely
transcendental) over $K$. A question related to the above mentioned
is whether $K(V)^G$ is stably rational, that is, whether there exist
independent variables $x_1,\dots,x_r$ such that
$K(V)^G(x_1,\dots,x_r)$ becomes a purely transcendental extension of
$K$. This problem has close connection with L\"uroth's problem
\cite{Sh} and the inverse Galois problem \cite{Sa2,Sw}.

Saltman \cite{Sa2} found examples of groups $G$ of order $p^9$ such
that $\mathbb C(V)^G$ is not stably rational over $\mathbb C$. His
main method was application of the unramified cohomology group
$H_{nr}^2(\mathbb C(V)^G,\mathbb Q/\mathbb Z)$ as an obstruction.
Bogomolov \cite{Bo} proved that $H_{nr}^2(\mathbb C(V)^G,\mathbb
Q/\mathbb Z)$ is canonically isomorphic to
\begin{equation*}
B_0(G)=\bigcap_{A}\ker\{\res_G^A:H^2(G,\mathbb Q/\mathbb Z)\to
H^2(A,\mathbb Q/\mathbb Z)\}
\end{equation*}
where $A$ runs over all the bicyclic subgroups of $G$ (a group $A$
is called bicyclic if $A$ is either a cyclic group or a direct
product of two cyclic groups). The group $B_0(G)$ is a subgroup of
the Schur multiplier $H^2(G,\mathbb Q/\mathbb Z)$, and
Kunyavski\u{i} \cite{Ku} called it the \emph{Bogomolov multiplier}
of $G$. Thus the vanishing of the Bogomolov multiplier is an
obstruction to Noether's problem.

Recently, Moravec \cite{Mo1} used a notion of the nonabelian
exterior square $G\wedge G$ of a given group $G$ to obtain a new
description of $B_0(G)$. Namely, he proved that $B_0(G)$ is
(non-canonically) isomorphic to the quotient group $M(G)/M_0(G)$,
where $M(G)$ is the kernel of the commutator homomorphism $G\wedge
G\to [G,G]$, and $M_0(G)$ is the subgroup of $M(G)$ generated by all
$x\wedge y$ such that $x,y\in G$ commute. Moravec studied the
functor $B_0(G)$ in \cite{Mo1}, and in particular he found the five
term exact sequence
\begin{equation*}
B_0(E)\to B_0(E/N)\to\frac{N}{\langle \mathcal K(E)\cap N\rangle}\to
E^{ab}\to(E/N)^{ab}\to 0,
\end{equation*}
where $E$ is any group, $N$ a normal subgroup of $E$ and $\mathcal
K(E)$ denotes the set of commutators in $E$.

If we assume that $N$ is a central subgroup of $E$, we derive in
Section \ref{3} a three term exact sequence
\begin{equation}\label{e1}
B_0(E)\to B_0(E/N)\to\frac{N\cap [E,E]}{\langle \mathcal K(E)\cap
N\rangle}.
\end{equation}

Let $G$ be a central product of $G_1$ and $G_2$, regarding $K_i\leq
Z(G_i)$, where $Z(G_i)$ is the centre of $G_i$ for $i=1,2$. Kang and
Kunyavski\u{i} \cite{KK} raised the question whether the triviality
of $B_0(G_1)$ and $B_0(G_2)$ implies the triviality of $B_0(G)$.
With the aid of \eqref{e1} we prove our first main result, stating
that if $\theta:G_1\to G_2$ is a group homomorphism such that its
restriction $\theta\vert_{K_1}:K_1\to K_2$ is an isomorphism, then
the triviality of $B_0(G_1/K_1), B_0(G_1)$ and $B_0(G_2)$ implies
the triviality of $B_0(G)$.

We prove two applications of this result obtaining the triviality of
the Bogomolov multipliers for the extra-special $p$-groups and for
the $4$-generator $p$-groups that are central products of metacyclic
$p$-groups. Of course, we will not consider groups that are direct
products of smaller groups, because of the following.

\begin{thm}\label{t1.1}
{\rm (Kang }\cite[Theorem 4.1]{Ka3}{\rm )} Let $G$ and $H$ be finite
groups. Then $B_0(G\times H)$ is isomorphic to $B_0(G)\times
B_0(H)$. As a corollary, if $B_0(G)$ and $B_0(H)$ are both trivial,
then also is $B_0(G\times H)$.
\end{thm}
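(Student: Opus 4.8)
The plan is to work through Moravec's description $B_0(K)\cong M(K)/M_0(K)$, where $M(K)=\ker(K\wedge K\to[K,K])$ and $M_0(K)=\langle x\wedge y:[x,y]=1\rangle$, since it converts the statement into a computation inside the nonabelian exterior square of $K=G\times H$. First I would record the decomposition of the exterior square of a direct product,
\[
(G\times H)\wedge(G\times H)\;\cong\;(G\wedge G)\times(H\wedge H)\times(G^{ab}\otimes H^{ab}),
\]
under which $(g_1,1)\wedge(g_2,1)$, $(1,h_1)\wedge(1,h_2)$ and $(g,1)\wedge(1,h)$ correspond to the three respective factors. This is the exterior-square refinement of the classical Schur multiplier formula $M(G\times H)\cong M(G)\oplus M(H)\oplus(G^{ab}\otimes H^{ab})$, and can be obtained by comparing the two defining sequences $0\to M(K)\to K\wedge K\to[K,K]\to 0$ for $K=G\times H$ and for the factors.

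Next I would identify the kernel of the commutator map. Since $[(g_1,h_1),(g_2,h_2)]=([g_1,g_2],[h_1,h_2])$, the commutator map respects the decomposition, acting as the commutator maps of $G$ and $H$ on the first two factors and vanishing on $G^{ab}\otimes H^{ab}$ (because $(g,1)$ and $(1,h)$ commute). Hence
\[
M(G\times H)\;=\;M(G)\times M(H)\times(G^{ab}\otimes H^{ab}).
\]

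The core of the argument is to pin down $M_0(G\times H)$. On one hand, every mixed generator $(g,1)\wedge(1,h)$ is a wedge of commuting elements, so the entire cross term $G^{ab}\otimes H^{ab}$ lies in $M_0(G\times H)$; moreover $(g_1,1)\wedge(g_2,1)$ with $[g_1,g_2]=1$ contributes $M_0(G)$, and likewise one obtains $M_0(H)$. Conversely, given a commuting pair $(g_1,h_1),(g_2,h_2)$, so that $[g_1,g_2]=[h_1,h_2]=1$, I would expand $(g_1,h_1)\wedge(g_2,h_2)$ after writing each entry as $(g_i,1)(1,h_i)$ and applying the multiplicativity relations of the exterior square. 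The main obstacle is bookkeeping the conjugation actions attached to those relations; the saving feature is that elements of $G$ and $H$ commute inside $G\times H$, so all cross-conjugations are trivial and the wedge splits as a product of four terms — one in $M_0(G)$, one in $M_0(H)$, and two in the cross term. This yields $M_0(G\times H)=M_0(G)\times M_0(H)\times(G^{ab}\otimes H^{ab})$.

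Taking the quotient, the cross term cancels and
\[
B_0(G\times H)\;=\;\frac{M(G\times H)}{M_0(G\times H)}\;\cong\;\frac{M(G)}{M_0(G)}\times\frac{M(H)}{M_0(H)}\;=\;B_0(G)\times B_0(H).
\]
The corollary is then immediate, since the triviality of $B_0(G)$ and $B_0(H)$ forces $B_0(G\times H)\cong 1\times 1$ to be trivial. I expect the delicate step to be the careful expansion of the mixed wedge together with the verification that the exterior-square decomposition is compatible with the commutator map; everything else is formal once those identities are in place.
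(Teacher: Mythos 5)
The first thing to say is that the paper contains no proof of this statement: Theorem \ref{t1.1} is imported verbatim from Kang \cite[Theorem 4.1]{Ka3} and used as a black box (e.g.\ to get $B_0(E)=0$ in the proof of Theorem \ref{main1}), so there is no internal argument to compare yours against. Judged on its own, your proof is correct. The decomposition $(G\times H)\wedge(G\times H)\cong(G\wedge G)\times(H\wedge H)\times(G^{ab}\otimes H^{ab})$ is a genuine theorem (the exterior-square version of the Brown--Johnson--Robertson computation of the nonabelian tensor square of a direct product); the commutator map does act componentwise and kills the cross term, giving your description of $M(G\times H)$; and the ``delicate step'' you flag does work exactly as you predict: writing $(g_i,h_i)=(g_i,1)(1,h_i)$, every conjugation occurring in the defining relations is trivial because $G\times 1$ centralizes $1\times H$ and $[g_1,g_2]=[h_1,h_2]=1$, so
\begin{equation*}
(g_1,h_1)\wedge(g_2,h_2)=\bigl((g_1,1)\wedge(1,h_2)\bigr)\bigl((g_1,1)\wedge(g_2,1)\bigr)\bigl((1,h_1)\wedge(1,h_2)\bigr)\bigl((1,h_1)\wedge(g_2,1)\bigr),
\end{equation*}
a product of one element of $M_0(G)$, one of $M_0(H)$, and two elements of the cross term. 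Combined with the obvious reverse inclusions this yields $M_0(G\times H)=M_0(G)\times M_0(H)\times(G^{ab}\otimes H^{ab})$, and the quotient computation finishes the proof. Your route also meshes naturally with this paper's own toolkit: the identity $[\widetilde x,\widetilde y]=[\alpha_{1},\beta_{1}][\alpha_{2},\beta_{2}]$ exploited in the proof of Theorem \ref{main1} is precisely the image under the commutator map of your four-factor expansion, so your argument is arguably more in the spirit of Sections 2--4 (Moravec's $M^*/M_0^*$ calculus) than the cohomological proof in the cited source, which rests instead on the K\"unneth decomposition of $H^2(G\times H,\mathbb Q/\mathbb Z)$ and on restrictions to the bicyclic subgroups $\langle g\rangle\times\langle h\rangle$ detecting the mixed term $H^1(G,\mathbb Q/\mathbb Z)\otimes H^1(H,\mathbb Q/\mathbb Z)$.

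One caveat on rigor: your justification of the exterior-square decomposition, ``comparing the two defining sequences,'' is not a proof. Knowing that the kernels ($M$) and images ($[K,K]$) decompose does not determine the middle group of an extension, and invoking the Schur--K\"unneth formula for $M(G\times H)$ there would in any case be circular, since you later re-derive $M(G\times H)$ \emph{from} the decomposition. This is a citation gap rather than a mathematical one: either quote the result (it follows from the tensor-square formula of Brown--Johnson--Robertson by factoring out the diagonal elements $x\otimes x$, which identifies the two mixed factors into a single copy of $G^{ab}\otimes H^{ab}$), or prove it directly by exhibiting the two mutually inverse homomorphisms via the universal property of $\wedge$. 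With that reference supplied, the proof is complete.
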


The Bogomolov multipliers for the groups of order $p^n$ for $n\leq
6$ were calculated recently in \cite{HoK,HKK,Mo2,CM}. The reader is
referred to the paper \cite{KK} for a survey of groups with trivial
Bogomolov multipliers.

\begin{defn}
We say that a group $G$ has the AEC (Abelian Extension of a Cyclic group) property if
$G$ has a normal abelian subgroup $H$ such that the quotient group
$G/H$ is cyclic
\end{defn}

Bogomolov \cite[Lemma 4.9]{Bo} proved that if $G$ has the AEC
property then $B_0(G)=0$. On the other hand, Noether's problem for
$p$-groups with the AEC property is still not solved entirely.
However, there are number of partial results that have been obtained
recently. We list two of them.

\begin{thm}\label{t1.2}
{\rm (Kang}\cite[Theorem 1.5]{Ka1}{\rm )} Let $G$ be a metacyclic
$p$-group with exponent $p^e$, and let $K$ be any field such that
{\rm (i)} char $K = p$, or {\rm (ii)} char $K \ne p$ and $K$
contains a primitive $p^e$-th root of unity. Then $K(G)$ is rational
over $K$.
\end{thm}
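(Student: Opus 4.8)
The argument splits according to the two cases in the statement, where throughout $K(G)$ denotes the invariant field $K(V)^{G}$ of the regular representation $V$. In case (i), where char $K=p$, nothing special about metacyclic groups is needed: by Kuniyoshi's theorem (rationality of the invariant field of the regular representation of a $p$-group over a field of characteristic $p$) one already has that $K(G)$ is rational over $K$ for \emph{every} $p$-group $G$, and no hypothesis on roots of unity is required. So the entire content lies in case (ii), and I would concentrate there.

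For case (ii) I would write $G$ metacyclically, fixing a normal cyclic subgroup $N=\langle a\rangle$ with cyclic quotient $G/N=\langle bN\rangle$, so that
\begin{equation*}
G=\langle a,b\mid a^{p^{m}}=1,\ b^{p^{n}}=a^{t},\ bab^{-1}=a^{s}\rangle
\end{equation*}
for suitable integers $m,n,s,t$, the exponent being $p^{e}$. The plan is a two-step descent through $N$. First I would linearize the normal factor: since $K$ contains a primitive $p^{e}$-th (hence a primitive $p^{m}$-th) root of unity, the action of the cyclic group $N$ on the representation space is diagonalizable, and by Fischer's theorem $K(V)^{N}$ is rational over $K$. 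The crucial point here is not merely rationality but the production of an explicit transcendence basis of $K(V)^{N}$ on which the residual group $G/N$ acts by \emph{monomial} $K$-automorphisms; the relation $bab^{-1}=a^{s}$ dictates how $b$ permutes and rescales the eigencoordinates, so one obtains an explicit multiplicative action of the cyclic group $\mathbb{Z}/p^{n}$ on a rational function field over $K$.

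The second step is to prove that the fixed field of this cyclic monomial action is again rational over $K$, so that the composite $K(V)^{G}=(K(V)^{N})^{G/N}$ is rational and the theorem follows. Here I would invoke the rationality theorems for monomial (purely monomial) actions of cyclic $p$-groups: the exponents recorded by the action assemble into a $\mathbb{Z}[\mathbb{Z}/p^{n}]$-lattice, and for cyclic $p$-groups over a field containing enough roots of unity such lattices decompose into pieces on which rationality is known.

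The main obstacle is this second step, and within it the twist coming from the possibly non-split relation $b^{p^{n}}=a^{t}$. After diagonalization the $G/N$-action is \emph{a priori} only monomial with a multiplier, that is, a one-cocycle valued in $K^{\times}$, and one must show that this multiplier is a coboundary, i.e.\ that it can be absorbed by a further rescaling of the coordinates, so that the action becomes purely monomial of the type to which the cyclic rationality theorem applies. This is exactly where both hypotheses of case (ii) are used: the $p$-group structure forces the relevant multiplier to be a $p$-power root of unity, and the presence of a primitive $p^{e}$-th root of unity in $K$ guarantees that the splitting scalars already lie in $K$. Once the cocycle is trivialized, the reduction to the known cyclic case is complete.
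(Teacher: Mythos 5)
The paper itself gives no proof of this statement: it is quoted from Kang \cite[Theorem 1.5]{Ka1}, and what the paper does reproduce from Kang's argument is its technical core, namely Theorem \ref{t5.4} (Kang's Theorem 4.1 on the action $\sigma: u_i\mapsto \zeta^{s^i}u_i$, $\tau: u_0\mapsto u_1\mapsto\cdots\mapsto u_{p^n-1}\mapsto u_0$), together with the auxiliary tools Theorem \ref{t5.1}, Theorem \ref{t5.2}, Theorem \ref{t5.3} and Lemma \ref{l5.5}. Measured against that argument, your case (i) is exactly right (Kuniyoshi), and your reduction in case (ii) is also the correct opening: induce a faithful character of the normal cyclic subgroup $\langle a\rangle$, observe that the quotient cyclic group then acts monomially on the eigencoordinates with a scalar multiplier coming from the non-split relation $b^{p^n}=a^t$, and absorb that multiplier by rescaling; the absorption does work as you say, because the needed scalar is a root of unity of order dividing $\mathrm{ord}(b)$, which divides $p^e$.

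The genuine gap is your second step. You dispose of the resulting cyclic monomial action by invoking ``rationality theorems for monomial (purely monomial) actions of cyclic $p$-groups'' and a decomposition of the associated $\mathbb{Z}[\mathbb{Z}/p^n]$-lattice into pieces with known rationality. No such general theorem exists: the known rationality results for monomial actions are essentially limited to transcendence degree at most $3$ (Hajja, Hajja--Kang, and successors), and the lattice-theoretic principle you appeal to fails --- $\mathbb{Z}[C_{p^n}]$-lattices for $n\geq 2$ do not in general decompose into (quasi-)permutation pieces; by Endo--Miyata this failure already produces retract-rational but non-stably-rational tori attached to $C_{p^2}$, so rationality cannot be extracted from the lattice structure alone. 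Proving rationality for the \emph{specific} cyclic monomial action arising here is precisely the content of Kang's Theorem 4.1 (Theorem \ref{t5.4} above), and it is the hard part of the whole proof: Kang must first put the conjugation unit into the normal form $s=\varepsilon+ap^r$, $a=1+bp^t$, $b\in\{-1,0,1\}$ --- a statement about the unit group $(\mathbb{Z}/p^m\mathbb{Z})^{\times}$, with genuinely separate complications at $p=2$, where that group is not cyclic (dihedral, semidihedral, quaternion cases) --- and then run a delicate induction via successive substitutions of the type $u_i/u_{i-1}$ combined with linearization lemmas such as Lemma \ref{l5.5}. Your outline replaces this entire argument by an appeal to a theorem that is, in substance, the statement being proved; as written the proof is circular at its decisive step.
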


\begin{thm}\label{t1.3}
{\rm (Michailov}\cite[Theorem 1.8]{Mi1}{\rm )} Let $G$ be a group of order $p^n$ for $n\geq 2$ with an abelian
subgroup $H$ of order $p^{n-1}$, and let $G$ be of exponent $p^e$.
Choose any $\alpha\in G$ such that $\alpha$ generates $G/H$, i.e.,
$\alpha\notin H,\alpha^p\in H$. Denote $H(p)=\{h\in H: h^p=1,h\notin
H^p\}\cup\{1\}$, and assume that $[H(p),\alpha]\subset H(p)$. Denote by
$G_{(i)}=[G,G_{(i-1)}]$ the lower central series for $i\geq 1$ and
$G_{(0)}=G$. Let the $p$-th lower central subgroup $G_{(p)}$ be
trivial. Assume that {\rm (i)} char $K = p>0$, or {\rm (ii)} char $K
\ne p$ and $K$ contains a primitive $p^e$-th root of unity. Then
$K(G)$ is rational over $K$.
\end{thm}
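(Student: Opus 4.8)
The plan is to treat the two hypotheses separately, the characteristic $p$ case being essentially immediate. If char $K = p > 0$ and $G$ is a finite $p$-group, then $K(G)$ is rational over $K$ by the classical theorem of Kuniyoshi (and Gasch\"utz): in characteristic $p$ a $p$-group admits a faithful representation by upper unitriangular matrices, and Miyata's theorem on invariants of one-variable-at-a-time actions yields rationality. So the substance lies in case (ii), and I would henceforth assume char $K \ne p$ and $\zeta_{p^e} \in K$, where $p^e = \exp G$.

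First I would replace the regular representation by a convenient faithful monomial one. Since $H \lhd G$ is abelian of index $p$, the field $K(G)$ is, by the no-name lemma (Speiser), rational over $K(W)^G$ for any faithful $K$-representation $W$ of $G$, so it suffices to prove $K(W)^G$ rational for a well-chosen $W$. Because $\zeta_{p^e} \in K$ and $H$ is abelian, I diagonalize the $H$-action: choosing variables $u_\chi$ indexed by the characters $\chi \in \hat H = \operatorname{Hom}(H, K^\times)$, the group $H$ acts by $h \cdot u_\chi = \chi(h) u_\chi$, while $\alpha$ acts by $\alpha \cdot u_\chi = t_\chi\, u_{\sigma^*\chi}$, where $\sigma$ denotes conjugation by $\alpha$ on $H$, $\sigma^*$ its dual automorphism on $\hat H$, and the scalars $t_\chi \in \langle \zeta_{p^e}\rangle$ form a $1$-cochain whose product along each $\sigma^*$-orbit is forced by $\alpha^p \in H$ acting as $\chi(\alpha^p)$. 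This recasts $K(G)$ as the fixed field of a \emph{twisted monomial action} of $G$ on a rational function field, through which $H$ acts diagonally and $\bar\alpha$ of order $p$ permutes the coordinates up to roots of unity.

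Next I would take $H$-invariants first. The diagonal $H$-action has rational fixed field generated by the Laurent monomials $\prod_\chi u_\chi^{a_\chi}$ lying in the kernel of the total-character map, and a standard basis of such monomials turns the residual $\langle\bar\alpha\rangle \cong C_p$-action into an honest multiplicative action twisted by roots of unity. Here the $\sigma^*$-orbit structure on $\hat H$ organizes the monomial generators into $\mathbb Z[C_p]$-sublattices: free orbits (of size $p$) contribute free summands $\mathbb Z[C_p]$, while $\sigma^*$-fixed characters contribute trivial summands $\mathbb Z$. For the free part the multiplicative $C_p$-invariants are rational (the invariants are generated by orbit norms, an Endo--Miyata/Masuda argument), and for the fixed part $\bar\alpha$ acts diagonally by roots of unity, whose invariant field is rational by a Kummer/Hilbert 90 change of variables. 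The two hypotheses enter precisely in gluing these pieces: $[H(p),\alpha] \subset H(p)$ guarantees that the diagonalization can be chosen compatibly with the action of $\alpha$ on the socle layer $H(p)$, so that the twists $t_\chi$ attached to the $\sigma^*$-fixed characters are genuine $p^e$-th roots of unity that can be absorbed; and $G_{(p)} = 1$, i.e. nilpotency class below $p$, bounds the unipotency of $\sigma$ so that the cochain $(t_\chi)$ becomes a coboundary on each orbit and can be normalized away by a monomial substitution.

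The main obstacle will be this last step: showing that the twisted $C_p$-monomial invariant field is rational when the twist is genuinely nontrivial, i.e. when the extension $1 \to H \to G \to C_p \to 1$ does not split. The untwisting amounts to solving, inside $K$, the norm equations produced by the cochain $(t_\chi)$; this is where the presence of $\zeta_{p^e}$ is essential, permitting the $p$-th root extractions that convert the twisted action into a genuine permutation action without leaving $K$ or introducing new non-rational constants. Once the twist is removed, rationality follows from the classification of $\mathbb Z[C_p]$-lattices into $\mathbb Z$, $\mathbb Z[\zeta_p]$ and $\mathbb Z[C_p]$, together with the known rationality of the associated multiplicative invariant fields.
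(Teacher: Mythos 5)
First, note that the paper does not actually prove Theorem \ref{t1.3}: it is quoted as background from \cite[Theorem 1.8]{Mi1}, and the closest in-paper relative of its proof is the argument of Section \ref{5}. Within your proposal, the characteristic-$p$ case (Kuniyoshi, Theorem \ref{t5.3}) is fine, and so is the general setup: reducing to a faithful monomial representation, diagonalizing $H$, and passing to $H$-invariant monomials so that $\bar\alpha$ acts on a lattice by a twisted monomial action is indeed how proofs in this circle of ideas begin. The genuine gap is at the step you yourself call ``the main obstacle,'' and your proposed resolution of it is not correct. On a free $\sigma^*$-orbit the product of the twists $t_\chi t_{\sigma^*\chi}\cdots t_{(\sigma^*)^{p-1}\chi}$ equals $\chi(\alpha^p)$, and this product is unchanged by every rescaling $u_\chi\mapsto c_\chi u_\chi$ with constants $c_\chi\in K^\times$, since the constants telescope around the orbit. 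So the twist on such an orbit can be normalized away only when $\chi(\alpha^p)=1$; and there are groups satisfying all hypotheses of the theorem---non-split metacyclic $p$-groups already provide examples---with free orbits on which $\chi(\alpha^p)\ne 1$. Removing such a twist by ``$p$-th root extraction'' would require a $p$-th root of the $p^e$-th root of unity $\chi(\alpha^p)$, i.e.\ $\zeta_{p^{e+1}}$, which $K$ is not assumed to contain: the hypothesis $\zeta_{p^e}\in K$ does not do what you want here. Rationality does hold in the presence of such twists, but establishing it is precisely the content of Kang's Theorem \ref{t5.4} (\cite[Theorem 4.1]{Ka1}), proved by a long chain of explicit substitutions rather than by untwisting; accordingly, both \cite{Mi1} and Section \ref{5} of this paper handle the twisted cyclic action not by removing the twist but by matching it with the action arising from a metacyclic group, so that Theorem \ref{t5.4} applies, and then linearizing what remains via Lemma \ref{l5.5}.

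A second, independent sign of trouble: your sketch never makes essential use of the hypotheses $[H(p),\alpha]\subset H(p)$ and $G_{(p)}=1$. The roles you assign to them (``compatible diagonalization on the socle,'' ``the cochain becomes a coboundary'') do not correspond to any actual identity: the orbit-product obstruction $\chi(\alpha^p)$ is insensitive to the nilpotency class of $G$, and all twists can be taken in $\langle\zeta_{p^e}\rangle$ in any case. If your argument worked as written, it would prove rationality of $K(G)$ for every $p$-group with an abelian subgroup of index $p$ (given $\zeta_{p^e}\in K$), which is an open problem---the paper itself stresses that Noether's problem for $p$-groups with the ABC property ``is still not solved entirely''; the two hypotheses are exactly what makes Michailov's induction in \cite{Mi1} work. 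Two smaller inaccuracies: the lattice $\ker(\mathbb{Z}^{\widehat H}\to\widehat H)$ of $H$-invariant monomials is not a direct sum of orbit summands $\mathbb{Z}$ and $\mathbb{Z}[C_p]$ (summands of type $\mathbb{Z}[\zeta_p]$, and for large $p$ non-principal ideals, occur), and rationality of multiplicative invariants does not reduce summand-by-summand once twists are present. Both are repairable in principle; the untwisting gap is not, at least not by the means you propose.
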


In Section \ref{5} of the present article we will give a positive
answer to Noether's problem for all $2$-generator $p$-groups of
nilpotency class $2$ (with the usual requirement for the roots of
unity). It seems that almost all known results for Noether's problem
regarding $p$-groups actually hold only for $p$-groups with the AEC
property and their direct products. In Section \ref{5} we will also
give a positive answer to Noether's problem for one series of $4$-generator
$p$-groups of nilpotency class $2$ that do not posses the AEC
property and that are not direct or central products of smaller groups.

%--------------------------------------Section 2-------------------------------------------------
\section{Preliminaries and notations}
\label{2}

Let $G$ be a group and $x,y\in G$. We define $x^y=y^{-1}xy$ and
write $[x,y]=x^{-1}x^y=x^{-1}y^{-1}xy$ for the commutator of $x$ and
$y$. We define the commutators of higher weight as
$[x_1,x_2,\dots,x_n]=[[x_1,\dots,x_{n-1}],x_n]$ for
$x_1,x_2,\dots,x_n\in G$.

Let $\varphi$ be an automorphism of $G$ and
$G^\varphi$ be an isomorphic copy of $G$ via $\varphi : x\mapsto
x^\varphi$. We define $\tau(G)$ to be the group generated by $G$ and
$G^\varphi$, subject to the following relations:
$[x,y^\varphi]^z=[x^z,(y^z)^\varphi]=[x,y^\varphi]^{z^\varphi}$ and
$[x,x^\varphi]=1$ for all $x, y, z\in G$. Obviously, the groups $G$
and $G^\varphi$ can be viewed as subgroups of $\tau(G)$. Let
$[G,G^\varphi]=\langle[x,y^\varphi] : x,y\in G\rangle$ be the
commutator subgroup.

Next, denote by $G\wedge G$ the nonabelian exterior square of $G$, which is a group generated by the
symbols $x\wedge y$ ($x,y\in G$), satisfying the relations
{\allowdisplaybreaks\begin{eqnarray*} &xy\wedge z&=(x^y\wedge
z^y)(y\wedge z),\\
&x\wedge yz&=(x\wedge z)(x^z\wedge y^z),\\
&x\wedge x&=1,
\end{eqnarray*}}
for all $x,y,z\in G$. Let
$[G,G]$ be the commutator subgroup of $G$. Observe that the
commutator map $\kappa : G\wedge G\to [G,G]$, given by $x\wedge
y\mapsto [x,y]$ is a well-defined group homomorphism. Let $M(G)$
denote the kernel of $\kappa$, and $M_0(G)$ detone the subgroup of
$M(G)$ generated by all $x\wedge y$ such that $x,y\in G$ commute.
Moravec proved in \cite{Mo1} that the Bogomolov multiplier $B_0(G)$ is (non-canonically)
isomorphic to the quotient group $M(G)/M_0(G)$.

Notice that the map $\phi : G\wedge G\to
[G,G^\varphi]$ given by $x\wedge y\mapsto [x,y^\varphi]$ is actually
an isomorphism of groups (see \cite{BM}).

Now, let $\kappa^*=\kappa\cdot\phi^{-1}$ be the composite map from
$[G,G^\varphi]$ to $[G,G]$, $M^*(G)=\ker\kappa^*$ and
$M_0^*(G)=\phi(M_0(G))$. Then $B_0(G)$ is clearly isomorphic to
$M^*(G)/M_0^*(G)$ by \cite{Mo1}. It is not hard to see that
\begin{equation*}
M^*(G)=\left\{\prod_{\text{finite}}[x_i,y_i^\varphi]^{\varepsilon_i}\in
[G,G^\varphi] : \varepsilon_i=\pm 1,
\prod_{\text{finite}}[x_i,y_i]^{\varepsilon_i}=1\right\},
\end{equation*}
and
\begin{equation*}
M_0^*(G)=\left\{\prod_{\text{finite}}[x_i,y_i^\varphi]^{\varepsilon_i}\in
[G,G^\varphi] : \varepsilon_i=\pm 1, [x_i,y_i]=1\right\}.
\end{equation*}
Thus, in order to prove that $B_0(G)=0$ for a given group $G$, it suffices
to show that $M^*(G)=M_0^*(G)$. This can be achieved by finding a
generating set of $M^*(G)$ consisting solely of elements of
$M_0^*(G)$.

The following two Lemmas contain various properties of $\tau(G)$ and $[G,G^\varphi]$ that will be used in our considerations.

\begin{lem}\label{l1}
{\rm (}\cite{BM}{\rm)} Let $G$ be a group.
\begin{enumerate}
    \item $[x,yz]=[x,z][x,y][x,y,z]$ and $[xy,z]=[x,z][x,z,y][y,z]$ for all $x,y,z\in G$.
    \item If $G$ is nilpotent of class $c$, then $\tau(G)$ is nilpotent of class at most
    $c+1$.
    \item If $G$ is nilpotent of class $\leq 2$, then $[G,G^\varphi]$ is abelian.
    \item $[x,y^\varphi]=[x^\varphi,y]$ for all $x,y\in G$.
    \item $[x,y,z^\varphi]=[x,y^\varphi,z]=[x^\varphi,y,z]=[x^\varphi,y^\varphi,z]=[x^\varphi,y,z^\varphi]=[x,y^\varphi,z^\varphi]$ for all $x,y,z\in G$.
    \item $[[x,y^\varphi],[a,b^\varphi]]=[[x,y],[a,b]^\varphi]$ for all $x,y,a,b\in G$.
    \item $[x^n,y^\varphi]=[x,y^\varphi]^n=[x,(y^\varphi)^n]$ for all integers $n$ and $x,y\in G$ with $[x,y]=1$.
    \item If $[G,G]$ is nilpotent of class $c$, then $[G,G^\varphi]$ is nilpotent of class $c$ or $c+1$.
\end{enumerate}
\end{lem}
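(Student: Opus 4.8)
The eight assertions are of three kinds, and the plan is to establish the elementary identities first, then the two structural statements, and finally deduce the remaining consequences from them.

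First I would dispose of (1), (4) and (5), which are pure commutator calculus. Identity (1) is the classical Hall identity and holds in every group: from $[x,yz]=[x,z][x,y]^z$ together with $[x,y]^z=[x,y][x,y,z]$ one reads off the first equality, and symmetrically for the second. For (4) I would transport the antisymmetry of the exterior square through the isomorphism $\phi\colon G\wedge G\to[G,G^\varphi]$: from $x\wedge x=1$ and bimultiplicativity one gets $x\wedge y=(y\wedge x)^{-1}$, whence $[x,y^\varphi]=[y,x^\varphi]^{-1}=[x^\varphi,y]$, the last step being the elementary $[a,b]^{-1}=[b,a]$. For (5) I would repeatedly feed the two defining relations of $\tau(G)$, namely $[x,y^\varphi]^z=[x^z,(y^z)^\varphi]=[x,y^\varphi]^{z^\varphi}$, into the formula $w^z=w[w,z]$. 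The second relation immediately yields $[x,y^\varphi,z]=[x,y^\varphi,z^\varphi]$, while (4) applied to the inner or to the outer commutator moves a $\varphi$ between the first two slots; the only genuinely computational link is $[x,y,z^\varphi]=[x,y^\varphi,z]$, which I would extract from the first relation by expanding $[x^z,(y^z)^\varphi]$ via $x^z=x[x,z]$ and $y^z=y[y,z]$ and comparing with $[x,y^\varphi][x,y^\varphi,z]$.

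The structural heart is (2). Here I would use that $\tau(G)$ is generated by $G$ and $G^\varphi$, that $[G,G^\varphi]$ is normal, and that modulo $[G,G^\varphi]$ the two copies centralize one another; an iterated–commutator computation then shows that every bracket of weight $\ge c+2$ in the generators $G\cup G^\varphi$ reduces, via the defining relations and (5), to a bracket whose underlying $G$-part already has weight $\ge c+1$ in $G$ and is therefore trivial, giving class at most $c+1$. Statement (3) is then immediate: for $c\le 2$ the group $\tau(G)$ has class at most $3$, so the weight-$4$ bracket $[[x,y^\varphi],[a,b^\varphi]]$ vanishes and $[G,G^\varphi]$ is abelian.

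I expect (6) to be the main obstacle, and I would prove it before (8), which depends on it. The identity $[[x,y^\varphi],[a,b^\varphi]]=[[x,y],[a,b]^\varphi]$ says that the bracket on $[G,G^\varphi]$ descends through the commutator map. The key tool is the strengthened relation that conjugation by $g\in G$ and by $g^\varphi$ agree on the normal subgroup $[G,G^\varphi]$; this extends from generators to all $g$, hence to $g=[a,b]$, giving $[w,g]=[w,g^\varphi]$ for every $w\in[G,G^\varphi]$. Writing $u=[x,y^\varphi]$ and $v=[a,b^\varphi]$ I would reduce $u^{v}$ to $u^{[a,b]^\varphi}$ and then identify $u^{-1}u^{[a,b]^\varphi}$ with $[[x,y],[a,b]^\varphi]$ using (4) and (5). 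The delicate point is that the conjugations by $a$ and by $b^\varphi$ do not commute, so the cross terms must be controlled; here I would invoke the class bound from (2) to kill the surplus high-weight brackets, so that only the claimed term survives.

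Finally, (7) and (8) are consequences. For (7), when $[x,y]=1$ property (5) gives $[x,y^\varphi,x]=[x,y,x^\varphi]=1$, so $[x,y^\varphi]$ commutes with $x$; an induction on $n$ using (1) then yields $[x^n,y^\varphi]=[x,y^\varphi]^n$, and the symmetric computation gives $[x,(y^\varphi)^n]$. For (8) I would transport the lower central series of $[G,G^\varphi]$ through $\phi$ and compare it with that of $[G,G]$ by means of (6): iterated brackets in $[G,G^\varphi]$ correspond under $\kappa^*$ to iterated brackets in $[G,G]$, and the discrepancy is exactly the central kernel $M^*(G)$, which can raise the class by at most one; this forces the class of $[G,G^\varphi]$ to be $c$ or $c+1$.
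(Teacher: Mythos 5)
The paper itself contains no proof of this lemma: it is quoted wholesale from Blyth and Morse \cite{BM}, so your attempt can only be measured against the standard arguments (essentially those of \cite{BM} and of Rocco's earlier construction). Your overall plan --- the elementary identities (1), (4), (5) first, the class bound (2) and its corollary (3) next, then (6), and finally (7) and (8) as consequences --- is exactly that standard route, and parts (1), (3), (5), (7) are in order. Two smaller remarks: your proof of (4) through the isomorphism $\phi\colon G\wedge G\to[G,G^\varphi]$ is legitimate inside this paper, where $\phi$ is granted, but it is mildly circular as a self-contained argument, since that isomorphism is itself a theorem of \cite{BM}; a direct proof just expands $1=[xy,(xy)^\varphi]$ using the defining relations. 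Likewise, in (2) you need an inductive extension of (5) to brackets of arbitrary weight before you may ``push all the $\varphi$'s into one slot''; this is routine but should be said.

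The one genuine flaw is in your treatment of (6). The ``delicate point'' you raise --- that the conjugations by $a$ and by $b^\varphi$ do not commute, producing cross terms which you propose to kill with the class bound of (2) --- is both a non-problem and an illegitimate fix. It is a non-problem because conjugation by the product $[a,b^\varphi]=a^{-1}(b^{-1})^\varphi a\,b^\varphi$ is \emph{exactly} the composition of the four conjugations; since $[G,G^\varphi]$ is normal in $\tau(G)$, you may replace each conjugating factor $g^\varphi$ by $g$ (or vice versa) one step at a time, using $w^g=w^{g^\varphi}$ for every $w\in[G,G^\varphi]$, and no error terms ever arise: this gives $u^{[a,b^\varphi]}=u^{[a,b]}=u^{[a,b]^\varphi}$ on the nose, whence $[u,[a,b^\varphi]]=[u,[a,b]^\varphi]$, and then (5) with $z=[a,b]$ finishes. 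It is an illegitimate fix because (6) is asserted for an \emph{arbitrary} group $G$, where (2) provides no class bound at all; had your argument really needed (2), it would prove (6) only for nilpotent $G$. The same conjugation-by-products observation is also what you need, and omit, in (8): the assertion that $M^*(G)=\ker\kappa^*$ is \emph{central} in $[G,G^\varphi]$ is not automatic, but follows because conjugation by $v\in[G,G^\varphi]$ on $[G,G^\varphi]$ depends only on $\kappa^*(v)$; once that is stated, your ``central kernel, hence class $c$ or $c+1$'' conclusion is correct.
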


\begin{lem}\label{l2}
{\rm (}\cite[Lemma 3.1]{Mo2}{\rm)} Let G be a nilpotent group of
class $\leq 3$. Then
\begin{equation*}
[x,y^n]=[x,y]^n[x,y,y]^{\binom{n}{2}}[x,y,y,y]^{\binom{n}{3}}
\end{equation*}
for all $x,y\in \tau(G)$ and every positive integer $n$.
\end{lem}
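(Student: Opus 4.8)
The plan is to argue by induction on $n$ inside the group $\tau(G)$, which by Lemma \ref{l1}(2) is nilpotent of class at most $4$ (as $G$ has class $\leq 3$). The decisive structural fact is a weight count: the commutators $[x,y]$, $[x,y,y]$ and $[x,y,y,y]$ have weights $2,3,4$, so any commutator formed between two of them, as well as the next collection term $[x,y,y,y,y]$, has weight at least $5$ and is therefore trivial in $\tau(G)$. In particular these three basic commutators pairwise commute, and this is exactly what forces the formula to terminate at the $[x,y,y,y]^{\binom n3}$ term. I would isolate these observations first, so that the inductive step reduces to bookkeeping.

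The base case $n=1$ is immediate, since $\binom12=\binom13=0$. For the step, write $y^{n+1}=y^n\cdot y$ and apply the first identity of Lemma \ref{l1}(1):
\[
[x,y^{n+1}]=[x,y^n\cdot y]=[x,y]\,[x,y^n]\,[x,y^n,y].
\]
The only genuine computation is the correction factor $[x,y^n,y]=\bigl[[x,y^n],y\bigr]$. Substituting the inductive hypothesis $[x,y^n]=[x,y]^n[x,y,y]^{\binom n2}[x,y,y,y]^{\binom n3}$ and expanding with Lemma \ref{l1}(1), every cross term vanishes by the weight count; using $\bigl[[x,y]^n,y\bigr]=[x,y,y]^n$ and $\bigl[[x,y,y]^{\binom n2},y\bigr]=[x,y,y,y]^{\binom n2}$ (the remaining power of $[x,y,y,y]$ contributes only weight-$5$ terms) one obtains
\[
[x,y^n,y]=[x,y,y]^n\,[x,y,y,y]^{\binom n2}.
\]

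It remains to collect the factors. Since the basic commutators commute, the exponents simply add, and Pascal's rule $\binom{n+1}{k}=\binom nk+\binom n{k-1}$ turns $1+n$, $\binom n2+n$ and $\binom n3+\binom n2$ into $\binom{n+1}{1}$, $\binom{n+1}{2}$ and $\binom{n+1}{3}$, yielding the desired expression for $[x,y^{n+1}]$. The main obstacle is really just the disciplined weight accounting: one must be sure that \emph{every} correction commutator produced by the non-commutativity of $\tau(G)$ has weight at least $5$ and hence drops out. Lemma \ref{l1}(2) provides precisely this class bound, after which the identity follows by a routine induction.
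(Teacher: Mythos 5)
Your proof is correct. There is, however, nothing in the paper to compare it against: the paper does not prove this lemma but imports it verbatim from Moravec \cite[Lemma 3.1]{Mo2}, using it later as a computational tool. Your argument---induction on $n$ via $[x,y^{n+1}]=[x,y]\,[x,y^n]\,[x,y^n,y]$ from Lemma \ref{l1}(1), with every correction commutator killed because $\tau(G)$ is nilpotent of class at most $4$ by Lemma \ref{l1}(2), so that $[x,y^n,y]=[x,y,y]^n[x,y,y,y]^{\binom{n}{2}}$ and Pascal's rule collects the exponents---is the standard proof of this Hall-type identity, and in particular the weight bookkeeping you emphasize (all cross commutators among $[x,y]$, $[x,y,y]$, $[x,y,y,y]$, as well as $[x,y,y,y,y]$, lie in $\gamma_5(\tau(G))=1$) is exactly the point that makes the formula terminate; this matches the argument one finds in the cited source.
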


%--------------------------------------Section 3-------------------------------------------------
\section{Central products}
\label{3}

Let $E$ be a group, and let $N$ be a central subgroup of $E$. For
$G=E/N$ define a map $\eta:M^*(E)\to M^*(G)$ by
$\eta(\prod[x_i,y_i^\varphi]^{m_i})=\prod
[x_iN,(y_iN)^\varphi]^{m_i}$. Clearly, $\eta$ is a homomorphism.
Now, define a map $\xi:[G,G^\varphi]\to E$ by
$\xi([x,y^\varphi])=[\widetilde x,\widetilde y]$, where
$x=\widetilde xN,y=\widetilde yN$. Since $N\leq Z(G)$, the map $\xi$
does not depend on the choice of $\widetilde x$ and $\widetilde y$.
Moreover, $\xi$ is an epimorphism. Denote $N_1=\xi(M^*(G))$ and
$N_0=\xi(M_0^*(G))$. It is not hard to see that $N_1=N\cap [E,E]$
and $N_0=\langle [\widetilde x,\widetilde y]\in N\rangle$.

One can now easily verify that we have the following exact
sequences:
\begin{equation*}
M_0^*(E)\overset{\eta}{\longrightarrow}
M_0^*(G)\overset{\xi}{\longrightarrow} N_0
\end{equation*}
and
\begin{equation*}
M^*(E)\overset{\eta}{\longrightarrow}
M^*(G)\overset{\xi}{\longrightarrow} N_1.
\end{equation*}
Hence we obtain the exact sequence
\begin{equation}\label{B0}
B_0(E)\overset{\eta_*}{\longrightarrow}
B_0(G)\overset{\xi_*}{\longrightarrow} N_1/N_0,
\end{equation}
and in particular, the isomorphism $B_0(G)/\eta_*(B_0(E))\simeq
N_1/N_0$.

Next, we are going to develop further the case when $G$ is a central
product of two groups $G_1$ and $G_2$ with a common central
subgroup. Let $\theta:K_1\to K_2$ be an isomorphism, where $K_1\leq
Z(G_1)$ and $K_2\leq Z(G_2)$, and let $E=G_1\times G_2$. Then the
central product of $G_1$ and $G_2$ is defined as the quotient group
$G=E/N$, where $N=\{ab: a\in K_1,b\in K_2, \theta(a)=b^{-1}\}\in
Z(E)$.

\begin{thm}\label{main1}
Let $\theta:G_1\to G_2$ be a group homomorphism such that its
restriction $\theta\vert_{K_1}:K_1\to K_2$ is an isomorphism, where
$K_1\leq Z(G_1)$ and $K_2\leq Z(G_2)$. Let $G$ be a central product
of $G_1$ and $G_2$, i.e., $G=E/N$, where $E=G_1\times G_2$ and
$N=\{ab: a\in K_1,b\in K_2, \theta(a)=b^{-1}\}$. If
$B_0(G_1/K_1)=B_0(G_1)=B_0(G_2)=0$ then $B_0(G)=0$.
\end{thm}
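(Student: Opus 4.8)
The plan is to reduce the statement, via the machinery of Section~\ref{3}, to a single concrete identity about commutators in $E=G_1\times G_2$, and then to establish that identity by an explicit construction. First I would record that, since $E=G_1\times G_2$ and $B_0(G_1)=B_0(G_2)=0$, Kang's Theorem~\ref{t1.1} yields $B_0(E)\cong B_0(G_1)\times B_0(G_2)=0$. Feeding this into the exact sequence \eqref{B0} attached to the central extension $G=E/N$ gives $\eta_*(B_0(E))=0$, and hence the isomorphism $B_0(G)\cong N_1/N_0$, where $N_1=N\cap[E,E]$ and $N_0=\langle\mathcal{K}(E)\cap N\rangle$. Thus the whole theorem collapses to proving the equality $N_1=N_0$; the inclusion $N_0\subseteq N_1$ is automatic, so only $N_1\subseteq N_0$ requires work.

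To exploit the remaining hypothesis I would apply \eqref{B0} once more, this time to the pair $(G_1,K_1)$ with quotient $G_1/K_1$. Since $B_0(G_1)=0$ and $B_0(G_1/K_1)=0$, the isomorphism $B_0(G_1/K_1)/\eta_*(B_0(G_1))\cong \bigl(K_1\cap[G_1,G_1]\bigr)/\langle\mathcal{K}(G_1)\cap K_1\rangle$ forces
\begin{equation*}
K_1\cap[G_1,G_1]=\langle\mathcal{K}(G_1)\cap K_1\rangle .
\end{equation*}
Now take any $z\in N_1$. As $E=G_1\times G_2$, write $z=(z_1,z_2)$ with $z_1\in[G_1,G_1]\cap K_1$ and $z_2\in[G_2,G_2]\cap K_2$; membership of $z$ in $N$ forces $z_2=\theta(z_1)^{-1}$. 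By the displayed equality, $z_1=\prod_i[a_i,b_i]$ with each $[a_i,b_i]\in K_1$, where $a_i,b_i\in G_1$.

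The crux of the argument, and the step I expect to be the genuine obstacle, is to convert this factorization of $z_1$ alone into a factorization of the whole element $z$ into commutators lying in $N$. The key idea is to use the full homomorphism $\theta:G_1\to G_2$ to manufacture a suitably \emph{twisted} commutator in $E$: for each index $i$ set $c_i=[(a_i,\theta(b_i)),(b_i,\theta(a_i))]$. A direct computation, using that $\theta$ is a homomorphism and that $[a_i,b_i]\in K_1$, $\theta([a_i,b_i])\in K_2$ are central, gives $c_i=\bigl([a_i,b_i],\,\theta([a_i,b_i])^{-1}\bigr)$, and the crossing of $\theta(a_i)$ with $\theta(b_i)$ is exactly what makes $c_i$ satisfy the defining relation of $N$, so $c_i\in\mathcal{K}(E)\cap N$. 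Since the $c_i$ are central in $E$ they commute, and multiplying them yields $\prod_i c_i=\bigl(z_1,\theta(z_1)^{-1}\bigr)=(z_1,z_2)=z$. Hence $z\in N_0$, proving $N_1\subseteq N_0$ and therefore $B_0(G)\cong N_1/N_0=0$. This construction also explains the asymmetry of the hypotheses: because $\theta$ transports $G_1$-commutators into genuine $G_2$-commutators, only $z_1$ needs to be decomposed, so $B_0(G_1/K_1)=0$ suffices and no condition on $G_2/K_2$ is required.
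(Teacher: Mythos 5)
Your proposal is correct and takes essentially the same route as the paper's own proof: both reduce the theorem via Kang's Theorem~\ref{t1.1} and the exact sequence \eqref{B0} to showing $N_1=N_0$, both apply \eqref{B0} a second time to the central extension $1\to K_1\to G_1\to G_1/K_1\to 1$ to write the $K_1$-component as a product of commutators of $G_1$ lying in $K_1$, and both conclude with the identical twisted-commutator construction, your $c_i=[(a_i,\theta(b_i)),(b_i,\theta(a_i))]=\bigl([a_i,b_i],\theta([a_i,b_i])^{-1}\bigr)\in\mathcal{K}(E)\cap N$ being exactly the paper's $[\alpha_{i1}\theta(\beta_{i1}),\beta_{i1}\theta(\alpha_{i1})]$. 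Your version merely streamlines the bookkeeping by noting directly that an element of $N_1$ has $G_1$-component in $K_1\cap[G_1,G_1]$, rather than starting from an arbitrary expression of it as a product of commutators of $E$ as the paper does.
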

\begin{proof}
Theorem \ref{t1.1} implies that $B_0(E)\simeq B_0(G_1)\times
B_0(G_2)=0$. Then from the exact sequence \eqref{B0} it follows that
$B_0(G)\simeq B_0(G)/\eta_*(B_0(E))\simeq N_1/N_0$. Therefore, we
need to show only that $N_0=N_1$.

Take arbitrary $\prod [\widetilde x_i,\widetilde y_i]^{m_i}=ab\in
N_1$, where $a\in K_1,b\in K_2,\theta(a)=b^{-1}, \widetilde
x_i,\widetilde y_i\in E$. We have now that $\widetilde
x_i=\alpha_{i1}\alpha_{i2}$ and $\widetilde
y_i=\beta_{i1}\beta_{i2}$ for $\alpha_{ij},\beta_{ij}\in G_j;1\leq
j\leq 2$. Since the elements of $G_1$ commute with the elements of
$G_2$, we obtain the formula $[\widetilde x_i,\widetilde
y_i]=[\alpha_{i1},\beta_{i1}][\alpha_{i2},\beta_{i2}]$. Hence
$a=\prod [\alpha_{i1},\beta_{i1}]^{m_i}$ and $b=\prod
[\alpha_{i2},\beta_{i2}]^{m_i}$.

Now, we may apply the exact sequence \eqref{B0} for the central
group extension
$$1\to K_1\to G_1\to G_1/K_1\to 1.$$
From $B_0(G_1/K_1)=0$ it follows that
$$\frac{K_1\cap [G_1,G_1]}{\mathcal
K(G_1)\cap K_1}\simeq \frac{B_0(G_1/K_1)}{\eta_*(B_0(G_1))}\simeq
0,$$ where $\mathcal K(G_1)$ is the set of commutators in $G_1$.
Therefore we may assume that $[\alpha_{i1},\beta_{i1}]$ are in $K_1$
for all $i$. Since $\theta$ is a homomorphism, we have that
$\theta(a)=\theta(\prod [\alpha_{i1},\beta_{i1}]^{m_i})=\prod
[\theta(\alpha_{i1}),\theta(\beta_{i1})]^{m_i}=b^{-1}$. Note that
the commutators $[\theta(\alpha_{i1}),\theta(\beta_{i1})]$ are in
$K_2$ for all $i$, so their position in the decomposition of $b$ is
of no importance. Therefore, {\allowdisplaybreaks\begin{eqnarray*}
&ab&=\prod [\widetilde x_i,\widetilde y_i]^{m_i}=\prod
[\alpha_{i1},\beta_{i1}]^{m_i}\prod
[\theta(\alpha_{i1}),\theta(\beta_{i1})]^{-m_i}\\
&&=\prod
([\alpha_{i1},\beta_{i1}][\theta(\alpha_{i1}),\theta(\beta_{i1})]^{-1})^{m_i}=\prod
[\alpha_{i1}\theta(\beta_{i1}),\beta_{i1}\theta(\alpha_{i1})]^{m_i}.
\end{eqnarray*}}
Since
$[\alpha_{i1}\theta(\beta_{i1}),\beta_{i1}\theta(\alpha_{i1})]=[\alpha_{i1},\beta_{i1}](\theta([\alpha_{i1},\beta_{i1}]))^{-1}\in
N$, we obtain finally that $ab\in N_0$.
\end{proof}

The first straightforward application of Theorem \ref{main1} is for
the extra-special $p$-groups. Recall that a $p$-group $G$ is
extra-special if its center $Z$ is cyclic of order $p$, and the
quotient $G/Z$ is a non-trivial elementary abelian $p$-group. Kang
and Kunyavski\u{i} also proved in another way the following.

\begin{cor}\label{c1}
{\rm (}\cite[Proposition 2.1]{KK}{\rm)} If $G$ is an extra-special
$p$-group of order $p^{2n+1}$ (for any $n\geq 1$), then $B_0(G)=0$.
\end{cor}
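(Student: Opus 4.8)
The goal is to derive the triviality of $B_0(G)$ for an extra-special $p$-group $G$ as an application of Theorem \ref{main1}. The plan is to exhibit $G$ as a central product of smaller groups to which the theorem applies. The key structural fact I would use is the classical classification of extra-special $p$-groups: every extra-special $p$-group of order $p^{2n+1}$ is a central product of $n$ extra-special $p$-groups of order $p^3$, amalgamated along their common center $Z \cong \mathbb{Z}/p$. Thus I would proceed by induction on $n$, with the base case $n=1$ handled separately and the inductive step realizing $G$ as a central product $G_1 \ast G_2$ where, say, $G_1$ has order $p^3$ and $G_2$ has order $p^{2(n-1)+1}$, both sharing the center $K_1 = K_2 = Z$.

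To apply Theorem \ref{main1} I must first verify its hypotheses in this setting. I would take $K_1 = Z(G_1)$ and $K_2 = Z(G_2)$, both cyclic of order $p$, and let $\theta \colon G_1 \to G_2$ be a homomorphism whose restriction $\theta\vert_{K_1} \colon K_1 \to K_2$ is the canonical identification of the two copies of the center used to form the central product. Such a $\theta$ exists because the center embeds into $G_2$ and one can extend this to a homomorphism $G_1 \to G_2$ factoring through the abelianization $G_1^{\mathrm{ab}}$ composed with the inclusion of $Z$; here I would need to check carefully that $\theta$ can be chosen to restrict to the required isomorphism on $K_1$. With the hypotheses in place, the theorem reduces the triviality of $B_0(G)$ to the three conditions $B_0(G_1/K_1) = B_0(G_1) = B_0(G_2) = 0$.

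The remaining work is to verify these three vanishing statements. For $B_0(G_1)$ with $G_1$ of order $p^3$: an extra-special group of order $p^3$ has an abelian normal subgroup of index $p$, so it possesses the ABC property, and hence $B_0(G_1) = 0$ by Bogomolov's Lemma 4.9 quoted in the introduction. For $B_0(G_1/K_1)$: the quotient $G_1/Z$ is elementary abelian, so $B_0$ of an abelian group vanishes (an abelian group trivially has the ABC property). Finally $B_0(G_2) = 0$ is exactly the inductive hypothesis, since $G_2$ is extra-special of smaller order $p^{2(n-1)+1}$. For the base case $n=1$, the group $G$ itself has order $p^3$ and is abelian-by-cyclic, so $B_0(G) = 0$ directly.

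The main obstacle I anticipate is not any deep cohomological computation but rather the careful bookkeeping of the central-product decomposition: one must confirm that the standard decomposition of an extra-special group genuinely matches the formal central-product construction $E/N$ of the theorem, with the same identified center, and that the homomorphism $\theta$ restricting isomorphically on $K_1$ can always be produced. Once the decomposition is set up correctly and $\theta$ is exhibited, every hypothesis of Theorem \ref{main1} is met by groups that are either abelian or abelian-by-cyclic, so the induction closes cleanly.
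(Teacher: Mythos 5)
Your overall strategy (induction on $n$, decomposition of $G$ as a central product of extra-special groups of order $p^3$, and an application of Theorem \ref{main1} with the three vanishing hypotheses checked via the ABC property) is the same as the paper's, and your verification of $B_0(G_1)=B_0(G_1/K_1)=B_0(G_2)=0$ is fine. However, the step you yourself flag as needing care---the existence of $\theta$---is exactly where your construction fails. You propose to build $\theta\colon G_1\to G_2$ by factoring through the abelianization $G_1^{\mathrm{ab}}$ composed with the inclusion of the center. For an extra-special group $G_1$ of order $p^3$ the center \emph{coincides} with the commutator subgroup, $K_1=Z(G_1)=[G_1,G_1]$, so any homomorphism factoring through $G_1^{\mathrm{ab}}$ kills $K_1$; its restriction to $K_1$ is then the trivial map, not an isomorphism, and Theorem \ref{main1} cannot be invoked. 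The paper instead takes $\theta$ to be a monomorphism $H_3\to G_2$: since $G_2$ is (in the inductive setup) itself a central product of Heisenberg groups, it contains a copy of $H_3$ whose center is precisely $Z(G_2)=K_2$, and the inclusion of that copy is the required map.

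There is a second, related gap. Your induction treats all extra-special groups of order $p^{2n+1}$ uniformly, but the needed $\theta$ does not exist for an arbitrary central-product decomposition. Observe that any homomorphism $\theta$ restricting to an isomorphism on $K_1$ must be injective: a nontrivial kernel, being normal in a $p$-group, meets the center $K_1\cong C_p$ nontrivially and hence contains it. So, for odd $p$, in the decomposition $H_3 * M$ of the exponent-$p^2$ extra-special group of order $p^5$ (where $M$ is the modular group of order $p^3$), no suitable $\theta$ exists in either direction: $H_3$, of exponent $p$, does not embed in $M$, nor does $M$, of exponent $p^2$, embed in $H_3$ (the analogous obstruction for $p=2$ occurs with $D_8$ and $Q_8$). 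The paper avoids this by proving $B_0=0$ only for the extra-special group that is a central product of $n$ copies of $H_3$, and then invoking Moravec's result \cite{Mo3} that isoclinic groups have isomorphic Bogomolov multipliers to transfer the conclusion to the second extra-special group of the same order. Without this isoclinism step (or some substitute for it), your induction never reaches one of the two isomorphism types.
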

\begin{proof}
Note first that for any $n$ there are two extra-special $p$-groups
of order $p^{2n+1}$, and they are isoclinic. According to a recent
result by Moravec \cite{Mo3}, the isoclinic groups have isomorphic
Bogomolov multipliers. It is well known that one of these two groups
can be obtained as a central product of $n$ Heisenberg groups $H_3$
of order $p^3$. We apply induction by $n$. For $n=1$ we have
$B_0(H_3)=0$ (see \cite{Be}). Let $G$ be a central product of the
extra-special $p$-group $G_2$ of order $p^{2n-1}$ and $H_3$, where
we assume that $B_0(G_2)=0$. Denote by $K_1\cong C_p$ and $K_2\cong
C_p$ the centers of $H_3$ and $G_2$, respectively. It is easy to
define a monomorphism $\theta: H_3\to G_2$, such that
$\theta(K_1)=K_2$, and since $B_0(H_3/K_1)=B_0(C_p\times C_p)=0$, we
may apply Theorem \ref{main1}.
\end{proof}

Another application of Theorem \ref{main1} we find in the following
Corollary, where we consider the central product of two split
metacyclic $p$-groups.

\begin{cor}\label{c2}
Let
$G=\langle\beta_1,\beta_2,\alpha_1,\alpha_2:[\beta_1,\alpha_1]=\beta_1^{p^r},
[\beta_2,\alpha_2]=\beta_2^{p^r},\alpha_1^{p^{a_1}}=\alpha_2^{p^{a_2}}=\beta_1^{p^b}=\beta_2^{p^b}=1,\beta_1^{p^{b-1}}=\beta_2^{p^{b-1}}\rangle$,
where $a_1,a_2,1\leq r \leq b-1,2\leq b$. Then $B_0(G)=0$.
\end{cor}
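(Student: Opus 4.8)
The plan is to realize $G$ as a central product of two split metacyclic $p$-groups and then invoke Theorem \ref{main1}, in the same spirit as the proof of Corollary \ref{c1}. First I would introduce the two metacyclic factors
$$G_1=\langle\beta_1,\alpha_1:[\beta_1,\alpha_1]=\beta_1^{p^r},\ \alpha_1^{p^{a_1}}=\beta_1^{p^b}=1\rangle,\qquad G_2=\langle\beta_2,\alpha_2:[\beta_2,\alpha_2]=\beta_2^{p^r},\ \alpha_2^{p^{a_2}}=\beta_2^{p^b}=1\rangle.$$
Each $G_i$ is metacyclic: from $[\beta_i,\alpha_i]=\beta_i^{p^r}$ one reads off $\beta_i^{\alpha_i}=\beta_i^{1+p^r}$, so $\langle\beta_i\rangle$ is normal of order $p^b$ with cyclic quotient $\langle\alpha_i\rangle$. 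Setting $K_i=\langle\beta_i^{p^{b-1}}\rangle$, a short computation $(\beta_i^{p^{b-1}})^{\alpha_i}=\beta_i^{(1+p^r)p^{b-1}}=\beta_i^{p^{b-1}}$ (using $r\geq 1$, so $p^{r+b-1}\equiv 0\pmod{p^b}$) shows $K_i\leq Z(G_i)$ and $K_i\cong C_p$. Taking $E=G_1\times G_2$ and $N=\langle\beta_1^{p^{b-1}}(\beta_2^{p^{b-1}})^{-1}\rangle$, the quotient $E/N$ reproduces precisely the given presentation of $G$, identifying $\beta_1^{p^{b-1}}$ with $\beta_2^{p^{b-1}}$; hence $G$ is the central product of $G_1$ and $G_2$ amalgamating $K_1$ with $K_2$.

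Next I would check the three hypotheses of Theorem \ref{main1}. For the first two, $B_0(G_1)=B_0(G_2)=0$ holds because a metacyclic $p$-group is abelian-by-cyclic and therefore enjoys the ABC property, so its Bogomolov multiplier vanishes by Bogomolov's lemma (equivalently via Theorem \ref{t1.2}). For the third, I would observe that
$$G_1/K_1=\langle\beta_1,\alpha_1:[\beta_1,\alpha_1]=\beta_1^{p^r},\ \alpha_1^{p^{a_1}}=\beta_1^{p^{b-1}}=1\rangle$$
is again metacyclic (now $\beta_1$ has order $p^{b-1}$; the constraint $r\leq b-1$ keeps the commutator relation consistent), so the same argument gives $B_0(G_1/K_1)=0$.

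The only step requiring genuine care is the construction of a homomorphism $\theta:G_1\to G_2$ whose restriction to $K_1$ is an isomorphism onto $K_2$, and here the possible asymmetry $a_1\neq a_2$ must be handled. After relabeling the factors I may assume $a_1\geq a_2$. Then I would set $\theta(\beta_1)=\beta_2$ and $\theta(\alpha_1)=\alpha_2$ and verify that the defining relations of $G_1$ are respected: the commutator relation survives since $[\beta_2,\alpha_2]=\beta_2^{p^r}=\theta(\beta_1^{p^r})$, the relation $\beta_1^{p^b}=1$ maps to $\beta_2^{p^b}=1$, and $\alpha_1^{p^{a_1}}=1$ maps to $\alpha_2^{p^{a_1}}=1$, which is valid precisely because $a_1\geq a_2$. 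Thus $\theta$ is a well-defined homomorphism with $\theta(\beta_1^{p^{b-1}})=\beta_2^{p^{b-1}}$, so $\theta\vert_{K_1}\colon K_1\to K_2$ is an isomorphism, and Theorem \ref{main1} then yields $B_0(G)=0$.

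The main obstacle is therefore not conceptual but organizational: it is the bookkeeping around the exponents $a_1,a_2$ governing $\alpha_1,\alpha_2$, which is needed to make $\theta$ well defined and is dispatched by the symmetry of the construction (a relabeling ensuring $a_1\geq a_2$). Everything else reduces to the known triviality of Bogomolov multipliers for metacyclic $p$-groups together with the central-product machinery already established in Theorem \ref{main1}.
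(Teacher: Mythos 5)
Your proposal is correct and follows essentially the same route as the paper: the same decomposition of $G$ as the central product $(G_1\times G_2)/N$ of the two split metacyclic factors with $K_i=\langle\beta_i^{p^{b-1}}\rangle\cong C_p$, the same homomorphism $\theta$ (after relabeling so that $a_1\geq a_2$), and the same appeal to Theorem \ref{main1} together with the triviality of the Bogomolov multiplier of metacyclic $p$-groups. The only cosmetic differences are that you verify $\theta$ is a homomorphism via von Dyck's theorem (checking relations) where the paper computes directly with normal forms $\beta_1^y\alpha_1^z$, and that you justify $B_0(G_i)=B_0(G_1/K_1)=0$ by the ABC property where the paper cites Kang's rationality theorem (Theorem \ref{t1.2}); both justifications are valid.
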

\begin{proof}
For $1\leq i\leq 2$ define
$G_i=\langle\beta_i,\alpha_i:[\beta_i,\alpha_i]=\beta_i^{p^r},
\alpha_i^{p^{a_i}}=\beta_i^{p^b}=1\rangle$. Without loss of
generality we will assume that $a_1\geq a_2$. Define a map
$\theta:G_1\to G_2$ by $\theta(\alpha_1)=\alpha_2$ and
$\theta(\beta_1)=\beta_2$. We are going to show that $\theta$ is a
homomorphism. Indeed, any element from $G_1$ can be written in the
form $\beta_1^y\alpha_1^z$. We have that
$\theta((\beta_1^w\alpha_1^x)(\beta_1^y\alpha_1^z))=\theta(\beta_1^{w+y(1+p^r)^x}\alpha_1^{x+z})=\beta_2^{w+y(1+p^r)^x}\alpha_2^{x+z}=\theta(\beta_1^w\alpha_1^x)\theta(\beta_1^y\alpha_1^z)$.
Next, note that
$[\beta_i^{p^{b-1}},\alpha_i]=\beta_i^{p^{r+b-1}}=1$, so
$\beta_i^{p^{b-1}}\in Z(G_i)$. For $1\leq i\leq 2$ define
$K_i=\langle\beta_i^{p^{b-1}}\rangle\cong C_p$. Clearly the
restriction $\theta\vert_{K_1}:K_1\to K_2$ is an isomorphism, so $G$
is a central product of the metacyclic $p$-groups $G_1$ and $G_2$.
Note that $G_1/K_1$ is also metacyclic. Our result now follows from
Theorems \ref{t1.2} and \ref{main1}.
\end{proof}

The group $G$ given in the statement of Corollary \ref{c2} is a
$4$-generator $p$-group of nilpotency class $2$ that does not posses
the AEC property. In the following section we will consider another group with the same properties which is not a central product of smaller groups.

%--------------------------------------Section 5-------------------------------------------------
\section{Noether's problem for groups of nilpotency class $2$}
\label{5}

Let $K$ be any field. A field extension $L$ of $K$ is called
rational over $K$ (or $K$-rational, for short) if $L\simeq
K(x_1,\ldots,x_n)$ over $K$ for some integer $n$, with
$x_1,\ldots,x_n$ algebraically independent over $K$. Now let $G$ be
a finite group. Let $G$ act on the rational function field
$K(x(g):g\in G)$ by $K$ automorphisms defined by $g\cdot x(h)=x(gh)$
for any $g,h\in G$. Denote by $K(G)$ the fixed field $K(x(g):g\in
G)^G$. {\it Noether's problem} then asks whether $K(G)$ is rational
over $K$. This is related to the inverse Galois problem, to the
existence of generic $G$-Galois extensions over $K$, and to the
existence of versal $G$-torsors over $K$-rational field extensions
\cite[33.1, p.86]{Sw,Sa1,GMS}. Noether's problem for abelian groups
was studied extensively by Swan, Voskresenskii, Endo, Miyata and
Lenstra, etc. The reader is referred to Swan's paper for a survey of
this problem \cite{Sw}.

We list several results which will be used in the sequel.

\begin{thm}\label{t5.1}
{\rm (}\cite[Theorem 1]{HK}{\rm )} Let $G$ be a finite group acting
on $L(x_1,\dots,x_m)$, the rational function field of $m$ variables
over a field $L$ such that
\begin{description}
    \item [(i)] for any $\sigma\in G, \sigma(L)\subset L;$
    \item [(ii)] the restriction of the action of $G$ to $L$ is
    faithful;
    \item [(iii)] for any $\sigma\in G$,
    \begin{equation*}
\begin{pmatrix}
\sigma(x_1)\\
\vdots\\
\sigma(x_m)\\
\end{pmatrix}
=A(\sigma)\begin{pmatrix}
x_1\\
\vdots\\
x_m\\
\end{pmatrix}
+B(\sigma)
\end{equation*}
where $A(\sigma)\in\GL_m(L)$ and $B(\sigma)$ is $m\times 1$ matrix
over $L$. Then there exist $z_1,\dots,z_m\in L(x_1,\dots,x_m)$ so
that $L(x_1,\dots,x_m)^G=L^G(z_1,\dots,z_m)$ and $\sigma(z_i)=z_i$
for any $\sigma\in G$, any $1\leq i\leq m$.
\end{description}
\end{thm}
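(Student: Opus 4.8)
The plan is to reinterpret the affine action as cocycle data over the Galois extension $L/L^G$ and to trivialize it by Galois descent. First I would record the constraints forced by the fact that $G$ acts by field automorphisms. Writing $\mathbf{x}=(x_1,\dots,x_m)^{T}$, letting $\sigma$ act entrywise on matrices over $L$, and comparing $(\sigma\tau)(\mathbf{x})=\sigma(\tau(\mathbf{x}))$ with hypothesis (iii), I obtain
\begin{equation*}
A(\sigma\tau)=\sigma(A(\tau))\,A(\sigma),\qquad
B(\sigma\tau)=\sigma(A(\tau))\,B(\sigma)+\sigma(B(\tau)).
\end{equation*}
By (ii) the finite group $G$ acts faithfully on $L$, so Artin's theorem makes $L/L^G$ a finite Galois extension with $\Gal(L/L^G)=G$; this is the arena in which I would invoke Hilbert's Theorem 90.

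I would then package the affine cocycle into a linear one by homogenizing. Set $\tilde A(\sigma)=\left(\begin{smallmatrix}A(\sigma)&B(\sigma)\\ 0&1\end{smallmatrix}\right)\in\GL_{m+1}(L)$, which lies in the affine subgroup $\mathrm{Aff}_m(L)$. A direct multiplication, using the two displayed relations, shows $\tilde A(\sigma\tau)=\sigma(\tilde A(\tau))\,\tilde A(\sigma)$, so $\sigma\mapsto\tilde A(\sigma)^{-1}$ satisfies the standard cocycle identity $c(\sigma\tau)=c(\sigma)\,\sigma(c(\tau))$ with values in $\GL_{m+1}(L)$.

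Next I would apply Hilbert 90 for the affine group to split both the linear and the translation parts at once. Triviality of $H^1(\Gal(L/L^G),\mathrm{Aff}_m(L))$ yields a block-triangular $\tilde P=\left(\begin{smallmatrix}P&Q\\ 0&1\end{smallmatrix}\right)$ with $P\in\GL_m(L)$, $Q\in L^{m}$, and $\tilde A(\sigma)=\sigma(\tilde P)^{-1}\tilde P$; unpacking the blocks gives $A(\sigma)=\sigma(P)^{-1}P$ and $B(\sigma)=\sigma(P)^{-1}\bigl(Q-\sigma(Q)\bigr)$. I then set $z_i=\sum_j P_{ij}x_j+Q_i$, i.e.\ $\mathbf{z}=P\mathbf{x}+Q$, and verify invariance:
\begin{equation*}
\sigma(\mathbf{z})=\sigma(P)\bigl(A(\sigma)\mathbf{x}+B(\sigma)\bigr)+\sigma(Q)
=P\mathbf{x}+\bigl(Q-\sigma(Q)\bigr)+\sigma(Q)=\mathbf{z}.
\end{equation*}
Since $P\in\GL_m(L)$, the $z_i$ are algebraically independent over $L$ with $L(x_1,\dots,x_m)=L(z_1,\dots,z_m)$; as $G$ fixes each $z_i$ and $L(z_1,\dots,z_m)/L^G(z_1,\dots,z_m)$ is Galois with group $G$ and fixed field $L^G(z_1,\dots,z_m)$, I conclude $L(x_1,\dots,x_m)^G=L^G(z_1,\dots,z_m)$.

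The main obstacle is supplying the descent input $H^1(\Gal(L/L^G),\mathrm{Aff}_m(L))=1$, and in particular checking that the trivializing matrix may be taken in affine (block-triangular) form. Conceptually this factors as two classical facts: the nonabelian Speiser form of Hilbert 90, $H^1(\Gal(L/L^G),\GL_m(L))=1$, which trivializes the linear part $A$ and is the genuinely nontrivial engine, together with the additive Hilbert 90 (the normal basis theorem), which trivializes the translation cocycle $B$; the semidirect-product structure $\mathrm{Aff}_m=\GL_m\ltimes\mathbb{G}_a^{\,m}$ lets these combine into the single block-triangular $\tilde P$ used above. Everything else in the argument is bookkeeping with the two cocycle identities.
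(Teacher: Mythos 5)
Your proof is correct: the two cocycle identities you derive are right, the block-matrix homogenization $\tilde A(\sigma)$ does satisfy $\tilde A(\sigma\tau)=\sigma(\tilde A(\tau))\tilde A(\sigma)$, the reduction of $H^1(\Gal(L/L^G),\mathrm{Aff}_m(L))=1$ to Speiser's theorem $H^1(\Gal(L/L^G),\GL_m(L))=1$ together with additive Hilbert 90, via the split exact sequence $1\to L^m\to \mathrm{Aff}_m(L)\to \GL_m(L)\to 1$, is sound (after trivializing the $\GL_m$-part the residual cocycle lands in the translation subgroup with the untwisted Galois action), and the endgame $L(z_1,\dots,z_m)^G=L^G(z_1,\dots,z_m)$ follows from Artin's theorem since $P\in\GL_m(L)$ makes the $z_i$ a transcendence basis. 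Be aware, though, that there is no internal proof to compare against: the paper states this result as a black box, quoted from \cite[Theorem 1]{HK}, and never proves it. Your argument is the standard linearization-by-Galois-descent proof of that cited theorem; the original source packages the same cohomological content module-theoretically (descending the $G$-stable $L$-span of $1,x_1,\dots,x_m$ to an $L^G$-form and extracting invariant generators from a basis) rather than through explicit nonabelian cocycles, so the two routes differ only in bookkeeping, not in substance.
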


\begin{thm}\label{t5.2}
{\rm (}\cite[Theorem 3.1]{AHK}{\rm )} Let $G$ be a finite group
acting on $L(x)$, the rational function field of one variable over a
field $L$. Assume that, for any $\sigma\in G,\sigma(L)\subset L$ and
$\sigma(x)=a_\sigma x+b_\sigma$ for any $a_\sigma,b_\sigma\in L$
with $a_\sigma\ne 0$. Then $L(x)^G=L^G(z)$ for some $z\in L[x]$.
\end{thm}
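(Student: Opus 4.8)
The plan is to build a single invariant $z$ that is affine-linear in $x$ with coefficients in $L$, and to do so by linearising the action in two cohomological steps. First I would record the cocycle structure: writing $\sigma(x)=a_\sigma x+b_\sigma$ with $a_\sigma\in L^\times$ and $b_\sigma\in L$, and composing two group elements, one gets $a_{\sigma\tau}=a_\sigma\,\sigma(a_\tau)$ and $b_{\sigma\tau}=\sigma(a_\tau)b_\sigma+\sigma(b_\tau)$. In particular $\sigma\mapsto a_\sigma$ is a $1$-cocycle of $G$ with values in the multiplicative group $L^\times$.

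Assume first that $G$ acts faithfully on $L$, so that $L/L^G$ is Galois with group $G$. Then Hilbert's Theorem 90 yields $c\in L^\times$ with $a_\sigma=\sigma(c)/c$ for all $\sigma$. I would replace $x$ by $y=c^{-1}x$; a short computation shows the multiplicative part cancels, leaving $\sigma(y)=y+c_\sigma$ with $c_\sigma=\sigma(c)^{-1}b_\sigma\in L$. Thus after this change of variable $G$ acts on $y$ purely by translations.

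The second step is the additive analogue. From $\sigma(y)=y+c_\sigma$ one reads off $c_{\sigma\tau}=c_\sigma+\sigma(c_\tau)$, so $\sigma\mapsto c_\sigma$ is a $1$-cocycle into the additive group $L^{+}$. The additive form of Hilbert 90 (equivalently $H^1(G,L^{+})=0$, which follows from the normal basis theorem) provides $e\in L$ with $c_\sigma=\sigma(e)-e$. Setting $z=y-e=c^{-1}x-e\in L[x]$ gives $\sigma(z)=z$ for every $\sigma$. Since $c^{-1}\neq 0$ and $e\in L$ we have $L(z)=L(x)$, whence $L(x)^G=L(z)^G$; and because $z$ is $G$-fixed and transcendental over $L$, comparing coefficients of a reduced fraction in $z$ shows $L(z)^G=L^G(z)$. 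This produces the required generator $z\in L[x]$ in the faithful case.

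The main obstacle is that the theorem does not presuppose faithfulness on $L$, so Hilbert 90 is not directly applicable: if $N=\ker(G\to\mathrm{Aut}(L))$ is nontrivial then $\sigma\mapsto a_\sigma$ restricts to a homomorphism $N\to L^\times$ which need not be trivial, whereas a coboundary must be trivial on $N$, so the multiplicative cocycle need not split. I would handle this by descending through $N$ first: $N$ fixes $L$ pointwise and acts on $L(x)$ by a finite group of $L$-affine maps, and by L\"uroth's theorem $L(x)^N=L(w)$ for some $w\in L(x)$, which one arranges to lie in $L[x]$. The delicate and characteristic-sensitive part is to show that the residual action of $G/N$ on $L(w)$ is again affine over $L$, so that the two-step argument above applies, and to produce $w$ explicitly: when $\mathrm{char}\,L\nmid |N|$ the finite affine group $N$ has a common fixed point and is linearisable, so $w$ may be taken as a power of a linear form, whereas when $\mathrm{char}\,L=p$ divides $|N|$ the kernel can contain translations and $w$ must be built from an additive (Artin--Schreier type) polynomial. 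Verifying these reductions, rather than the clean Hilbert 90 core, is where the real work of the proof lies.
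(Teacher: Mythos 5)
The paper you are working from never proves this statement at all: Theorem \ref{t5.2} is quoted verbatim from \cite{AHK} (Theorem 3.1 there), so the only meaningful comparison is with that source, whose argument runs along exactly the lines you sketch---split off the kernel $N$ of $G\to\mathrm{Aut}(L)$, generate $L(x)^N$ by an explicit polynomial, and finish with descent for the faithful action of $G/N$. Your faithful case is correct and complete: the cocycle identities $a_{\sigma\tau}=a_\sigma\sigma(a_\tau)$ and $b_{\sigma\tau}=\sigma(a_\tau)b_\sigma+\sigma(b_\tau)$ are right, multiplicative Hilbert 90 gives $y=c^{-1}x$ with $\sigma(y)=y+c_\sigma$, additive Hilbert 90 (normal basis theorem) gives the invariant $z=c^{-1}x-e$, and the reduced-fraction argument for $L(z)^G=L^G(z)$ is sound.

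But the theorem does not assume faithfulness, and in the general case your text is a plan rather than a proof: you yourself flag that ``the real work'' is to show that $G/N$ acts affinely over $L$ on a polynomial generator $w$ of $L(x)^N$, and you do not do it. That is a genuine gap, because this step is precisely where the normality of $N$ in $G$ must be exploited, and nothing in your two Hilbert 90 steps substitutes for it. Concretely, what must be verified is: (a) in the tame case ($\mathrm{char}\,L\nmid|N|$, $N\neq 1$) the map $\tau\mapsto a_\tau$ embeds $N$ into $L^\times$ (no nontrivial translations can occur), so $N$ is cyclic of order $n$ with a unique common fixed point $c$; uniqueness together with $\sigma N\sigma^{-1}=N$ forces the equivariance $\sigma(c)=a_\sigma c+b_\sigma$ for \emph{every} $\sigma\in G$, not just $\sigma\in N$, and only then does $w=(x-c)^n$ transform by $\sigma(w)=a_\sigma^n w$, an affine action of $G/N$ over $L$; (b) in the wild case the translation subgroup $T\leq N$ is the unique Sylow $p$-subgroup of $N$, hence normal in $G$, its set $B$ of translation constants is an $\mathbb{F}_p$-subspace of $L$ satisfying $\sigma(B)=a_\sigma B$ (because $\sigma t\sigma^{-1}$ is the translation by $a_\sigma^{-1}\sigma(b)$), and only this stability gives $\sigma(w_1)=a_\sigma^{|B|}\bigl(w_1+w_1(a_\sigma^{-1}b_\sigma)\bigr)$ for the additive polynomial $w_1=\prod_{b\in B}(x-b)$, i.e.\ an affine action on $L(x)^T=L(w_1)$; one then repeats (a) for $N/T$ and concludes with your faithful-case argument applied to $G/N$. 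These verifications are elementary but they are the substance of the theorem beyond the Galois case; as submitted, your proof establishes the result only under the additional hypothesis that $G$ acts faithfully on $L$.
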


\begin{thm}\label{t5.3}
{\rm (Kuniyoshi }\cite[Theorem 1.7]{CK}{\rm )} If $char K=p>0$ and
$G$ is a finite $p$-group, then $K(G)$ is rational over $K$.
\end{thm}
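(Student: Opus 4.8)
The plan is to realize $K(G)$ as the fixed field of a \emph{unipotent} linear action and then strip off the variables one at a time using Theorem \ref{t5.2}. Write $N=|G|$ and let $V=\bigoplus_{g\in G}K\,x(g)$, so that $G$ acts on $V$ by the left regular representation $\sigma\cdot x(h)=x(\sigma h)$ and $K(G)=K(V)^G=K(x(g):g\in G)^G$. Since $G$ fixes $K$ pointwise we have $K^G=K$, and it suffices to prove that $K(V)^G$ is rational over $K$.

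The crucial point, and the only place where $\operatorname{char}K=p$ enters, is that this action is triangularizable with trivial diagonal. Identify $V$ with the group algebra $K[G]$, on which $G$ acts by left multiplication, and let $I$ be the augmentation ideal. First I would record that $I$ is nilpotent: by induction on $|G|$, choosing a central element $z$ of order $p$, one has $(z-1)^p=z^p-1=0$ in characteristic $p$, while $K[G]/(z-1)K[G]\cong K[G/\langle z\rangle]$ has nilpotent augmentation ideal by induction, and $(z-1)$ is central nilpotent. Consequently the filtration $K[G]\supseteq I\supseteq I^2\supseteq\cdots\supseteq I^{d+1}=0$ is finite, and $G$ acts trivially on each quotient $I^k/I^{k+1}$ because $\sigma\cdot m-m=(\sigma-1)m\in I\cdot I^k=I^{k+1}$ for $m\in I^k$. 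Refining this $G$-stable filtration to a complete flag
\begin{equation*}
0=V_0\subset V_1\subset\cdots\subset V_N=V,\qquad \dim_K V_i=i,
\end{equation*}
with each $V_i$ a $G$-submodule and trivial action on $V_i/V_{i-1}$, I would choose a basis $y_1,\dots,y_N$ with $V_i=\langle y_1,\dots,y_i\rangle_K$. Then for every $\sigma\in G$ one has $\sigma(y_i)=y_i+\ell_{i,\sigma}$, where $\ell_{i,\sigma}$ is a $K$-linear form in $y_1,\dots,y_{i-1}$.

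With the triangular coordinates in hand the rest is a formal descent. Put $L_i=K(y_1,\dots,y_i)$; each $L_i$ is $G$-stable and $L_N=K(y_1,\dots,y_N)=K(V)$. I claim $L_i^G$ is rational over $K$ for every $i$, by induction on $i$. The base case $L_0=K$ is clear. For the inductive step, $G$ acts on $L_i=L_{i-1}(y_i)$ with $\sigma(y_i)=y_i+\ell_{i,\sigma}$ and $\ell_{i,\sigma}\in L_{i-1}$, so Theorem \ref{t5.2} applies with $a_\sigma=1$ and $b_\sigma=\ell_{i,\sigma}$ and yields $L_i^G=L_{i-1}^G(z_i)$ for some $z_i\in L_{i-1}[y_i]$. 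A transcendence-degree count gives $\operatorname{trdeg}_K L_i^G=\operatorname{trdeg}_K L_i=i$, so $z_i$ is transcendental over $L_{i-1}^G$; since $L_{i-1}^G$ is rational over $K$ by induction, so is $L_i^G$. Taking $i=N$ yields that $K(G)=L_N^G$ is rational over $K$.

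The main obstacle is the structural input of the second paragraph, namely that the regular representation of a $p$-group in characteristic $p$ is unipotent, equivalently the nilpotence of the augmentation ideal. Once the $G$-stable complete flag with trivial quotients is produced, the action takes the affine-linear form required by Theorem \ref{t5.2}, and the induction is routine. I would also note that Theorem \ref{t5.1} is \emph{not} directly applicable here, since it requires the action on the ground field to be faithful, whereas the action of $G$ on each intermediate field $L_{i-1}$ (and in particular on $K$) need not be; this is precisely why the one-variable Theorem \ref{t5.2}, which imposes no faithfulness hypothesis, is the right tool.
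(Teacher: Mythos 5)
The paper does not actually prove Theorem \ref{t5.3}: it is quoted as a known result of Kuniyoshi, with the proof deferred to the cited reference \cite[Theorem 1.7]{CK}, so there is no internal argument to compare against. Your proof is correct and is essentially the standard argument underlying that reference: in characteristic $p$ the augmentation ideal of $K[G]$ is nilpotent (your central-element induction, using $(z-1)^p=z^p-1=0$ and centrality of $z-1$, is the usual way to see this), hence the regular representation is unipotent and admits a complete $G$-stable flag with trivial successive quotients; the fixed field is then recovered one variable at a time by the affine one-variable descent of Theorem \ref{t5.2}, and your transcendence-degree count correctly ensures each new generator $z_i$ is transcendental over $L_{i-1}^G$, so rationality propagates. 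Your closing remark is also apt: Theorem \ref{t5.1} cannot be used here because its hypothesis (ii) demands a faithful action on the ground field, which fails along the tower, and this is precisely why the faithfulness-free Theorem \ref{t5.2} is the right tool.
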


\begin{thm}\label{t5.4}
{\rm (Kang }\cite[Theorem 4.1]{Ka1}{\rm )} Let $G$ be a metacyclic
$p$-group of exponent $p^e$, generated by $\sigma$ and $\tau$ such
that $\sigma^{p^m}=\tau^{p^n}=1,\tau^{-1}\sigma\tau=\sigma^s$ for
$s=\varepsilon+ap^r$, where $a=1+b\cdot p^t$ for $t\in\mathbb N$ and
$b\in\{-1,0,1\}$. Let $K$ be a field, containing a primitive
$p^e$-th root of unity, and let $\zeta$ be a primitive $p^m$-th root
of unity. Then $K(u_0,u_1,\dots,u_{p^n-1})^G$ is rational over $K$,
where $G$ acts on $u_0,\dots,u_{p^n-1}$ by
\begin{eqnarray*}
\sigma&:&u_i\mapsto \zeta^{s^i}u_i,\\
\tau&:&u_0\mapsto u_1\mapsto\cdots\mapsto u_{p^n-1}\mapsto u_0.
\end{eqnarray*}
\end{thm}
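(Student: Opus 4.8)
The plan is to exploit the normal cyclic subgroup $N=\langle\sigma\rangle\cong C_{p^m}$, which acts \emph{diagonally} on the $u_i$ because $K$ contains the primitive $p^m$-th root of unity $\zeta$. I would first compute $K(u_0,\dots,u_{p^n-1})^N$ as a field of monomial invariants, and then analyze the residual action of the cyclic quotient $G/N=\langle\bar\tau\rangle\cong C_{p^n}$ on it.

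For the first stage, set $w_i=u_i u_{i-1}^{-s}$ (indices mod $p^n$). Each $w_i$ is $\sigma$-fixed, since $\sigma(w_i)=\zeta^{s^i-s\cdot s^{i-1}}w_i=w_i$, while for the wrap-around index $w_0=u_0 u_{p^n-1}^{-s}$ one needs $s^{p^n}\equiv 1\pmod{p^m}$, which follows from $\tau^{p^n}=1$ and $\tau^{-1}\sigma\tau=\sigma^s$ applied to $\sigma$ of order $p^m$. Adjoining one ``norm'' monomial such as $z=u_0^{p^m}$ completes a transcendence basis, and a rank/determinant count on the lattice of $\sigma$-invariant exponents (which has index $p^m$ in $\mathbb Z^{p^n}$) shows that $K(u_0,\dots,u_{p^n-1})^N=K(z,w_1,\dots,w_{p^n-1})$ is rational over $K$. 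The advantage of the symmetric generators $w_i$ is that $\tau$ permutes them cyclically: $\tau(w_i)=u_{i+1}u_i^{-s}=w_{i+1}$, so $\bar\tau$ acts on the invariant field as a cyclic shift of the $w_i$ together with the single further relation $\tau(z)=w_1^{p^m}z^{s}$.

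The second stage is to prove that $K(z,w_1,\dots,w_{p^n-1})^{\langle\bar\tau\rangle}$ is rational; this is a monomial action of $C_{p^n}$ which is a twisted form of the permutation action on $p^n$ coordinates. Here I would change variables to decouple the cyclic permutation part from the single twisted coordinate $z$: the aim is to replace $z$ by a monomial $z'=z^{c}\prod_i w_i^{e_i}$ on which $\bar\tau$ acts affinely over the subfield $L$ generated by the permuted coordinates, so that Theorem~\ref{t5.1} splits off $z'$ and reduces the problem to the rationality of $L^{\langle\bar\tau\rangle}$, finishing with Theorem~\ref{t5.2} if a single variable survives. The residual permutation action of $C_{p^n}$ on the remaining rational function field is then rational because $K$ contains a primitive $p^n$-th root of unity.

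The hard part is exactly this decoupling in the second stage: the relation $\tau(z)=w_1^{p^m}z^{s}$ has exponent $s\neq 1$, so $z$ itself is not acted on affinely and Theorem~\ref{t5.2} cannot be invoked directly. Producing a genuine affine coordinate amounts to solving congruences for the exponents $c$ and $e_i$ modulo $p^m$, and it is precisely here that the prescribed shape $s=\varepsilon+ap^{r}$ with $a=1+bp^{t}$ and $b\in\{-1,0,1\}$ enters, since it pins down $s^{p^n}\bmod p^m$ and the partial sums $\sum_i s^i\bmod p^m$ that decide whether the twisting cocycle is a coboundary. Checking that the permuted part is a genuine permutation lattice, so that the concluding rationality step is legitimate, is the remaining technical point.
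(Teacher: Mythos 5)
First, be aware that the paper itself contains \emph{no} proof of this statement: it is Kang's theorem, imported verbatim from \cite[Theorem 4.1]{Ka1} and used as a black box in Section~\ref{5}. So your attempt can only be measured against Kang's original argument, which is a lengthy induction with case analysis on the arithmetic shape of $s$.

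Your first stage is correct and standard: for the diagonal action of $N=\langle\sigma\rangle$ the invariant field is generated by the monomials whose exponent vectors lie in the lattice $L=\{a\in\mathbb Z^{p^n}:\sum_i a_i s^i\equiv 0\pmod{p^m}\}$, which has index $p^m$, and your generators $z=u_0^{p^m}$, $w_i=u_iu_{i-1}^{-s}$ ($1\le i\le p^n-1$) span a sublattice of determinant $\pm p^m$, hence all of $L$. The genuine gap is in the second stage, and it is twofold. First, your description of the residual action is incorrect: $\tau(w_i)=w_{i+1}$ holds only for $i\le p^n-2$, whereas the last generator satisfies $\tau(w_{p^n-1})=w_0=u_0u_{p^n-1}^{-s}$, which is not one of your coordinates; rewritten in them it reads $\tau(w_{p^n-1})=z^{(1-s^{p^n})/p^m}\prod_{i=1}^{p^n-1}w_i^{-s^{p^n-i}}$. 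So the action is twisted at the wrap-around as well as at $z$; equivalently, $L$ with its $\tau$-action (the kernel of $\mathbb Z[C_{p^n}]\to\mathbb Z/p^m$, $e_i\mapsto s^i$) is \emph{not} a permutation lattice, and your concluding appeal to ``the residual permutation action of $C_{p^n}$ is rational'' has nothing to apply to. Second, the decoupling that would repair this --- producing monomial coordinates on which $\bar\tau$ acts affinely so that Theorems~\ref{t5.1} and~\ref{t5.2} can be invoked --- is only announced (``I would change variables\dots'', ``the aim is to replace $z$\dots''): no exponents $c,e_i$ are constructed, no congruence is shown to be solvable, and the hypotheses $s=\varepsilon+ap^r$, $a=1+bp^t$, $b\in\{-1,0,1\}$ are never actually used. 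You correctly flag this as ``the hard part'', but it is not a residual technicality: it is the entire content of the theorem. Rationality of multiplicative invariant fields of cyclic $p$-groups acting on non-permutation lattices is not automatic even when $K$ contains all the roots of unity one could wish for, and Kang's proof succeeds precisely by exploiting the prescribed shape of $s$ in a delicate multi-case induction. As it stands, your proposal is a correct reduction of the statement to a multiplicative invariant problem, followed by a restatement of that problem rather than a solution of it.
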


\begin{lem}\label{l5.5}
{\rm (}\cite{Ka2,HuK}{\rm)} Let $\langle\tau\rangle$ be a cyclic
group of order $n>1$, acting on $L(v_1,\dots,v_{n-1})$, the rational
function field of $n-1$ variables over a field $L$ such that
\begin{eqnarray*}
\tau&:&v_1\mapsto v_2\mapsto\cdots\mapsto v_{n-1}\mapsto (v_1\cdots
v_{n-1})^{-1}\mapsto v_1.
\end{eqnarray*}
If $L$ contains a primitive $n$th root of unity $\xi$, then
$L(v_1,\dots,v_{n-1})=L(s_1,\dots,s_{n-1})$ where $\tau:s_i\mapsto
\xi^is_i$ for $1\leq i\leq n-1$.
\end{lem}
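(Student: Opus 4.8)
The plan is to pass to the symmetric picture in which $\tau$ acts by an honest cyclic permutation. First I would set $w_j=\tau^j(v_1)$ for $0\le j\le n-1$, so that $w_0=v_1,\dots,w_{n-2}=v_{n-1}$ and $w_{n-1}=(v_1\cdots v_{n-1})^{-1}$. Then $\tau$ cyclically permutes $w_0\mapsto w_1\mapsto\cdots\mapsto w_{n-1}\mapsto w_0$, the single multiplicative relation $w_0w_1\cdots w_{n-1}=1$ holds, and $L(v_1,\dots,v_{n-1})=L(w_0,\dots,w_{n-1})$. In geometric terms the field is the function field of the $(n-1)$-dimensional norm-one torus $T=\{\prod_j w_j=1\}$ carrying the diagonal cyclic action, and the content of the lemma is precisely that this action can be linearized, i.e. realized birationally as the diagonal representation $\bigoplus_{i=1}^{n-1}L_{\xi^i}$ on which $\tau$ scales the $i$-th coordinate by $\xi^i$.

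Since $\xi\in L$, the permutation representation on $w_0,\dots,w_{n-1}$ diagonalizes: the Lagrange resolvents $z_i=\sum_{j}\xi^{-ij}w_j$ satisfy $\tau(z_i)=\xi^i z_i$, one has $L(w_0,\dots,w_{n-1})=L(z_0,\dots,z_{n-1})$, and the relation $\prod_j w_j=1$ becomes a single $C_n$-invariant circulant-norm equation in the $z_i$. I would stress at this point that the resolvents alone do \emph{not} finish the proof, and I would record the pitfall explicitly. The invariant coordinate $z_0$ cannot simply be dropped, and the scaling $w_j\mapsto\omega w_j$ with $\omega^n=1$ preserves $T$, so every ratio $z_i/z_0$ is fixed by this $\mu_n$; hence $L(z_1/z_0,\dots,z_{n-1}/z_0)$ lies in the fixed field of $\mu_n$ and is a proper subfield of $L(T)$. (Already for $n=2$ one checks that $z_1=v_1-v_1^{-1}$ generates only a quadratic subfield of $L(v_1)$.) The eigen-generators must therefore break this $\mu_n$-scaling.

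To produce them I would exploit the fixed locus of $\tau$ on $T$, namely the $n$ rational points $(\zeta^k,\dots,\zeta^k)$ with $\zeta^n=1$ (all defined over $L$ because $\xi\in L$), to set up a Cayley/M\"obius-type $\tau$-equivariant birational map from $T$ onto $\bigoplus_{i=1}^{n-1}L_{\xi^i}$, and define $s_1,\dots,s_{n-1}$ to be the pulled-back linear eigen-coordinates, so that $\tau(s_i)=\xi^i s_i$ holds by construction. The model to generalize is $n=2$: there $\tau:v_1\mapsto v_1^{-1}$ has fixed points $v_1=\pm1$, and the transform $s_1=(v_1-1)/(v_1+1)$ carries them to $0$ and $\infty$, conjugating inversion to $s_1\mapsto-s_1=\xi s_1$ while remaining invertible, so $L(s_1)=L(v_1)$. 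The fixed points play the same role in higher dimension, serving as the base points that make the linearizing substitution equivariant and birational.

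The hard part will be the last step: showing that the $n-1$ twisted eigenfunctions $s_i$ really generate all of $L(T)$ — equivalently, that the equivariant map is genuinely invertible rather than dominant of degree $n$ as the naive resolvents are. I would settle this in one of two ways: either by writing the inverse rational map explicitly, expressing each $w_j$ as a rational function of $s_1,\dots,s_{n-1}$ and checking that the two compositions are the identity; or by a transcendence-degree argument, noting that $s_1,\dots,s_{n-1}$ are $n-1$ algebraically independent eigenfunctions, so that $L(s_1,\dots,s_{n-1})\subseteq L(T)$ is forced to be an equality once the $w_j$ are shown to be rational in the $s_i$. It is exactly here that the hypothesis $\xi\in L$ is used decisively (through the splitting of $L[C_n]$ and the vanishing of the linearization obstruction for this cyclic lattice). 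Granting generation, the prescribed eigenvalue relations $\tau(s_i)=\xi^i s_i$ are immediate, and the lemma follows.
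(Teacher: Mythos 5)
Your setup and your diagnosis of the pitfall are both correct and show real understanding: with $w_j=\tau^j(v_1)$ subject to $\prod_j w_j=1$, the resolvents $z_i=\sum_j\xi^{-ij}w_j$ indeed do not suffice, because the ratios $z_i/z_0$ are invariant under the translation by $\mu_n\subset T$ (scaling every $w_j$ by $\omega$ with $\omega^n=1$), so they generate only an index-$n$ subfield of $L(T)$. But from that point on the proposal is a plan, not a proof: everything the lemma asserts is contained in the step you yourself label ``the hard part'' and then defer. The Cayley/M\"obius-type equivariant map ``set up from the fixed locus'' is never written down, so there is nothing whose birationality could be checked; of your two suggested ways to finish, the first (``write the inverse explicitly'') is precisely what is missing, and the second (transcendence degree) is insufficient as stated --- your own counterexample $z_1/z_0,\dots,z_{n-1}/z_0$ consists of $n-1$ algebraically independent eigenfunctions with the prescribed eigenvalues which nevertheless fail to generate, so independence ``once the $w_j$ are shown to be rational in the $s_i$'' is just a restatement of the goal. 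Similarly, ``vanishing of the linearization obstruction for this cyclic lattice'' is not a theorem you can invoke; it \emph{is} the lemma. (Note also that the paper itself gives no proof of Lemma \ref{l5.5}; it quotes it from \cite{Ka2,HuK}, so the comparison must be with the standard argument given there.)

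The missing construction is short, and it is essentially the one in the cited sources; it also vindicates your fixed-point intuition. Pass to the cone rather than the torus: set $x_0=1$ and $x_j=v_1v_2\cdots v_j$ for $1\le j\le n-1$, so that $v_j=x_j/x_{j-1}$ and a direct check gives $\tau(x_j)=x_{j+1}/x_1$ with indices mod $n$; thus $L(v_1,\dots,v_{n-1})$ is the function field of $\mathbb{P}^{n-1}$ with $\tau$ cyclically permuting the homogeneous coordinates $x_0,\dots,x_{n-1}$. Take resolvents \emph{upstairs}, $y_i=\sum_j\xi^{-ij}x_j$, and put $s_i=y_i/y_0$ for $1\le i\le n-1$. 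Then $\tau(y_i)=\xi^iy_i/x_1$, hence $\tau(s_i)=\xi^is_i$; and Fourier inversion $x_k=\tfrac{1}{n}\sum_i\xi^{ik}y_i$ (legitimate since $\xi\in L$ forces char $L\nmid n$) yields $x_k/x_0=\bigl(1+\sum_{i\ge1}\xi^{ik}s_i\bigr)/\bigl(1+\sum_{i\ge1}s_i\bigr)$, so every $v_k=x_k/x_{k-1}$ is rational in the $s_i$ and $L(v_1,\dots,v_{n-1})=L(s_1,\dots,s_{n-1})$. For $n=2$ this is exactly your Cayley transform $s_1=(1-v_1)/(1+v_1)$, and the eigenfunctions vanish at the $\tau$-fixed point $v_1=\dots=v_{n-1}=1$, as you predicted. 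The conceptual reason this succeeds where your torus resolvents fail is that forming ratios of the $y_i$ quotients by the global scaling of $\mathbb{A}^n$, a direction transverse to $\mathbb{P}^{n-1}$, whereas forming ratios of the $z_i$ quotients by a $\mu_n$ that sits \emph{inside} $T$ and therefore loses a degree-$n$ extension. With this paragraph supplied your argument becomes a complete proof; without it there is a genuine gap exactly at the lemma's core.
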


Now, let us consider $2$-generator $p$-groups of nilpotency class
$2$. They were recently classified by Ahmad, Magidin and Morse
\cite{AMM}, although we will not use their classification theorems. Let $G$ be any $2$-generator $p$-group of class $2$ of
order $p^n$. Then $[G,G]$ is a central subgroup of $G$ that is
isomorphic to $C_{p^c}$ with $c\geq 1$, and $G/[G,G]$ is isomorphic
to $C_{p^a}\times C_{p^b}$ with $n=a+b+c$. Without loss of
generality assume $a\geq b$. Let $\{\alpha,\beta\}$ be a transversal
of $G/[G,G]$. Then $\alpha^{p^a}$ and $\beta^{p^b}$ are elements of
$[G,G]$, and we have $[\alpha^{p^a},\beta]=[\alpha,\beta]^{p^a}=1$
and $[\alpha,\beta^{p^b}]=[\alpha,\beta]^{p^b}=1$. Hence $p^c$, the
order of $[G,G]$, divides both $p^a$ and $p^b$. It follows that
$1\leq c\leq b\leq a$. From this analysis we may view any
$2$-generator group $G$ of order $p^n$ and class $2$ as a central
extension of the form
\begin{equation*}
1\longrightarrow C_{p^c}=\langle[\beta,\alpha]\rangle\longrightarrow
 G\longrightarrow C_{p^a}\times C_{p^b} \longrightarrow 1.
\end{equation*}
Since $H=\langle[\beta,\alpha],\alpha\rangle$ is a normal abelian
subgroup of $G$, the group $G$ has the AEC property. Our
verification shows however, that this group does not fit in the
conditions of the known rationality criteria. We give an answer to
Noether's problem for $G$ in the following.

\begin{thm}
Let $n\geq 3$ and let $G$ be a $2$-generator $p$-group of class $2$
of order $p^n$. Let $G$ be of exponent $p^e$. Assume that $K$ is any
field satisfying that either {\rm (i)} char $K = p>0$, or {\rm (ii)}
char $K \ne p$ and $K$ contains a primitive $p^e$-th root of unity.
Then $K(G)$ is rational over $K$.
\end{thm}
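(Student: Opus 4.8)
The plan is to split on the two hypotheses for $K$. Case (i) is immediate: if char $K=p>0$ then $G$ is a finite $p$-group and Theorem \ref{t5.3} gives that $K(G)$ is rational over $K$ with nothing further to check. The entire argument therefore concentrates on case (ii), where char $K\neq p$ and $K$ contains a primitive $p^e$-th root of unity; I would use the structure already established just before the statement, namely that $H=\langle[\beta,\alpha],\alpha\rangle$ is an abelian normal subgroup, $Q:=G/H=\langle\bar\beta\rangle$ is cyclic of order $p^b$, $\gamma:=[\beta,\alpha]$ is central, and $\beta^{p^b}=\gamma^{t}\in\langle\gamma\rangle$ for some $t$.

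The first step is to linearize and diagonalize. Writing $K(G)=K(x(g):g\in G)^{G}$ on the regular representation and using that $\exp H$ divides $p^e$ while $K$ carries enough roots of unity, I would pass from the basis $\{x(g)\}$ to the $H$-eigenvectors
\[
y_{\chi,i}=\sum_{h\in H}\chi(h)^{-1}x(\beta^{i}h),\qquad \chi\in\widehat H,\ 0\le i<p^{b},
\]
and record the resulting monomial action: each $h\in H$ scales $y_{\chi,i}$ by $\chi(h^{\beta^{i}})$, so that $\alpha$ and $\gamma$ act diagonally with $\alpha$-eigenvalues running through the geometric pattern $\chi(\alpha)\chi(\gamma)^{-i}$, while $\beta$ sends $y_{\chi,i}\mapsto y_{\chi,i+1}$ for $i<p^{b}-1$ and $y_{\chi,p^{b}-1}\mapsto\chi(\gamma^{t})\,y_{\chi,0}$. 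Thus $K(G)$ becomes the invariant field of an explicit monomial action in which $H$ acts through characters and $Q$ cyclically permutes each $\chi$-block of $p^{b}$ variables, up to a root-of-unity twist at the wrap-around.

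The second step is to peel off the diagonal torus. Taking $H$-invariants first, the field $K(y_{\chi,i})^{H}$ is generated by the $H$-invariant Laurent monomials and is purely transcendental over $K$; since $\beta$ normalizes $H$ it preserves the invariance condition and hence acts on a lattice of monomial generators, so that $K(G)=\bigl(K(y_{\chi,i})^{H}\bigr)^{Q}$ is the multiplicative invariant field of the \emph{cyclic} group $Q\cong C_{p^{b}}$ acting on the $\mathbb Z[Q]$-lattice $L=\ker(\mathbb Z^{N}\to\widehat H)$ sitting inside the free permutation module $\mathbb Z^{N}\cong\mathbb Z[Q]^{|H|}$. The whole problem is thereby reduced to rationality of a cyclic monomial action over $K$: I would untwist and diagonalize the relevant blocks of $L$ by Lemma \ref{l5.5}, clear the surviving single-variable affine pieces by Theorem \ref{t5.2}, and strip the linearly transforming variables by Theorem \ref{t5.1}; in the special metacyclic subcase, where $H$ is cyclic, the core is exactly the diagonal-plus-cyclic action of Theorem \ref{t5.4} and Kang's result applies directly.

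I expect this last step to be the main obstacle. Because $H$ may have rank $2$, the group $G$ is in general \emph{not} metacyclic, so Theorem \ref{t5.4} cannot be invoked for $G$ as a whole; one must instead control the induced $C_{p^{b}}$-action on $L$ and exhibit a filtration whose successive quotients are either trivial lattices or of the precise form handled by Lemma \ref{l5.5}. The delicate bookkeeping is to absorb the wrap-around twist $\chi(\gamma^{t})$ coming from $\beta^{p^{b}}=\gamma^{t}$ together with the geometric eigenvalue pattern $\chi(\alpha)\chi(\gamma)^{-i}$ \emph{uniformly} across all admissible triples $(a,b,c)$, and to verify the faithfulness hypothesis required for each application of Theorem \ref{t5.1}. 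Arranging these reductions so that the action is trivialized stepwise, leaving a purely transcendental extension of $K$, is the technical heart of the proof.
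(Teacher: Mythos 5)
Your Case (i) reduction to Theorem~\ref{t5.3} is fine, and your opening moves in Case (ii) (induce characters from an abelian normal subgroup, pass to a monomial action, take invariants in stages) parallel the paper's first steps. But the proposal has a genuine gap, and you name it yourself: everything is reduced to the rationality of a \emph{twisted} cyclic monomial action --- the $C_{p^b}$-action on the lattice $L$ with the wrap-around twist coming from $\beta^{p^b}\in\langle\gamma\rangle$ and the eigenvalue pattern $\chi(\alpha)\chi(\gamma)^{-i}$ --- and you then say that trivializing this action ``is the technical heart of the proof'' without doing it. That heart is the entire content of the theorem: Lemma~\ref{l5.5} only linearizes the \emph{untwisted} inverse-product permutation action, Theorem~\ref{t5.2} only strips one affine variable at a time, and no filtration of $L$ into pieces of those two shapes is exhibited or even sketched. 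General monomial actions of cyclic $p$-groups are not automatically rational, so the claim cannot be waved through; a concrete mechanism for absorbing the twist is required, and the proposal does not supply one.

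The paper's resolution of exactly this difficulty is worth contrasting with your plan. First, instead of the full regular representation, it induces a single $2$-dimensional character space of $H=\langle\beta,\gamma\rangle$ up along $\alpha$, obtaining a faithful subspace of dimension $2p^a$; Theorem~\ref{t5.1} then makes this small subspace suffice, and the ratio variables $u_i=x_i/x_{i-1}$, $v_i=y_i/y_{i-1}$ leave just two blocks of $p^a-1$ variables. Second --- and this is the missing idea --- the twisted block $K(u_i)$ is handled not by untwisting it, but by observing that the action of $\langle\beta,\alpha\rangle$ on it coincides \emph{verbatim} with the action of an auxiliary metacyclic group $\widetilde G=\langle\sigma,\tau:\sigma^{p^{2c}}=\tau^{p^a}=1,\ \tau^{-1}\sigma\tau=\sigma^{1+p^c}\rangle$ on the corresponding ratio variables $U_i=V_i/V_{i-1}$, because $\zeta_{p^{2c}}^{k^i-k^{i-1}}=\zeta_{p^c}$ for $k=1+p^c$. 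Kang's Theorem~\ref{t5.4}, applied to $\widetilde G$ (not to $G$, which is indeed generally non-metacyclic), then gives rationality of that block, and Lemma~\ref{l5.5} linearizes the untwisted block $K(v_i)$. Your instinct that ``in the metacyclic subcase Theorem~\ref{t5.4} applies directly'' is pointing at the right tool, but the theorem's actual use is as a comparison device for a block of the action of a non-metacyclic $G$; without that comparison (or some substitute), your argument does not close.
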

\begin{proof}
Let $V$ be a $K$-vector space whose dual space $V^*$ is defined as
$V^*=\bigoplus_{g\in G}K\cdot x(g)$ where $G$ acts on $V^*$ by
$h\cdot x(g)=x(hg)$ for any $h,g\in G$. Thus $K(V)^G=K(x(g):g\in
G)^G=K(G)$. The key idea is to find a faithful $G$-subspace $W$ of
$V^*$ and to show that $W^G$ is rational over $K$. Then by Theorem
\ref{t5.1} it will follow that $K(G)$ rational over $K$.

Let $\gamma=[\beta,\alpha]$, and let $H=\langle\beta,\gamma\rangle$.
The subspace $W$ is obtained as an induced representation from $H$.
Without loss of generality we may assume that $\beta^{p^b}=1$, so
$H\simeq C_{p^b}\times C_{p^c}$

Define $X_1,X_2\in V^*$ by
\begin{equation*}
X_1=\sum_{i=0}^{p^b-1}x(\beta^i),~
X_2=\sum_{i=0}^{p^c-1}x(\gamma^i).
\end{equation*}
Note that $\beta\cdot X_1=X_1$ and $\gamma\cdot X_2=X_2$.

Let $\zeta_{p^b}\in K$ be a primitive $p^b$-th root of unity, and
let $\zeta_{p^c}\in K$ be a primitive $p^c$-th root of unity. Define
$Y_1,Y_2\in V^*$ by

\begin{equation*}
Y_1=\sum_{i=0}^{p^c-1}\zeta_{p^c}^{-1}\gamma^i\cdot X_1,~
Y_2=\sum_{i=0}^{p^b-1}\zeta_{p^b}^{-1}\beta^i\cdot X_2.
\end{equation*}

It follows that {\allowdisplaybreaks \begin{align*}
\beta\ :\ &Y_1\mapsto Y_1,~ Y_2\mapsto\zeta_{p^b} Y_2,\\
\gamma\ :\ &Y_1\mapsto\zeta_{p^c} Y_1,~ Y_2\mapsto Y_2.
\end{align*}}
Thus $K\cdot Y_1+K\cdot Y_2$ is a representation space of the
subgroup $H$. The induced subspase $W$ depends on the relations in
$G$.  It is well known that the case $\alpha^{p^a}=\beta^f\gamma^h$
can easily be reduced to the case $\alpha^{p^a}=1$ (see e.g.
\cite[Proof of Theorem 1.8, Step 2]{Mi1}).

Define $x_i=\alpha^i\cdot Y_1,y_i=\alpha^i\cdot Y_2$ for $0\leq
i\leq p^a-1$. Note that from the relation $[\beta,\alpha]=\gamma$ it
follows that $\beta\alpha^i=\alpha^i\beta\gamma^i$. We have now
{\allowdisplaybreaks \begin{align*}
\beta\ :\ &x_i\mapsto\zeta_{p^c}^i x_i,~ y_i\mapsto\zeta_{p^b}y_i,\\
\gamma\ :\ &x_i\mapsto\zeta_{p^c} x_i,~ y_i\mapsto y_i,\\
\alpha\ :\ &x_0\mapsto x_1\mapsto\cdots\mapsto x_{p^a-1}\mapsto x_0,\\
&y_0\mapsto y_1\mapsto\cdots\mapsto y_{p^a-1}\mapsto y_0.
\end{align*}}
for $0\leq i\leq p^a-1$. For $1\leq i\leq p^a-1$, define
$u_i=x_i/x_{i-1}$ and $v_i=y_i/y_{i-1}$. Thus $K(x_i,y_i:0\leq i\leq
p^a-1)=K(x_0,y_0,u_i,v_i:1\leq i\leq p^a-1)$ and for every $g\in G$
\begin{equation*}
g\cdot x_0\in K(u_i,v_i:1\leq i\leq p^a-1)\cdot x_0,~ g\cdot y_0\in
K(u_i,v_i:1\leq i\leq p^a-1)\cdot y_0,
\end{equation*}
while the subfield $K(u_i,v_i:1\leq i\leq p^a-1)$ is invariant by
the action of $G$. Thus $K(x_i,y_i:0\leq i\leq
p^a-1)^{G}=K(u_i,v_i:1\leq i\leq p^a-1)^{G}(u,v)$ for some $u,v$
such that $\alpha(v)=\beta(v)=\gamma(v)=v$ and
$\alpha(u)=\beta(u)=\gamma(u)=u$. We have now {\allowdisplaybreaks
\begin{align*}
\beta\ :\ &u_i\mapsto\zeta_{p^c} u_i,~ v_i\mapsto v_i,\\
\tag{5.1} \gamma\ :\ &u_i\mapsto u_i,~ v_i\mapsto v_i,\\
\alpha\ :\ &u_1\mapsto u_2\mapsto\cdots\mapsto u_{p^a-1}\mapsto (u_1u_2\cdots u_{p^a-1})^{-1},\\
&v_1\mapsto v_2\mapsto\cdots\mapsto v_{p^a-1}\mapsto (v_1v_2\cdots
v_{p^a-1})^{-1},
\end{align*}}
for $1\leq i\leq p^a-1$. From Theorem \ref{t5.2} it follows that if
$K(u_i,v_i:1\leq i\leq p^a-1)^{G}$ is rational over $K$, so is
$K(x_i,y_i:0\leq i\leq p^a-1)^{G}$ over $K$.

Now, consider the metacyclic $p$-group $\widetilde
G=\langle\sigma,\tau:\sigma^{p^{2c}}=\tau^{p^a}=1,\tau^{-1}\sigma\tau=\sigma^{k},k=1+p^c\rangle$.

Define $X=\sum_{0\leq j\leq
p^{2c}-1}\zeta_{p^{2c}}^{-j}x(\sigma^j),V_i=\tau^i X$ for $0\leq
i\leq p^a-1$. It follows that
\begin{eqnarray*}
\sigma&:&V_i\mapsto \zeta_{p^{2c}}^{k^i}V_i,\\
\tau&:&V_0\mapsto V_1\mapsto\cdots\mapsto V_{p^a-1}\mapsto V_0.
\end{eqnarray*}
Note that $K(V_0,V_1,\dots,V_{p^a-1})^{\widetilde G}$ is rational by
Theorem \ref{t5.4}.

Define $U_i=V_i/V_{i-1}$ for $1\leq i\leq p^a-1$. Then
$K(V_0,V_1,\dots,V_{p^a-1})^{\widetilde G}=K(U_1,U_2,\dots,$
$U_{p^a-1})^{\widetilde G}(U)$ where
\begin{eqnarray*}
\sigma&:&U_i\mapsto \zeta_{p^{2c}}^{k^i-k^{i-1}}U_i,~ U\mapsto U\\
\tau&:&U_1\mapsto U_2\mapsto\cdots\mapsto U_{p^a-1}\mapsto
(U_1U_2\cdots U_{p^a-1})^{-1},~ U\mapsto U.
\end{eqnarray*}

Notice that
$\zeta_{p^{2c}}^{k^i-k^{i-1}}=\zeta_{p^{2c}}^{(1+p^c)^{i-1}p^c}=\zeta_{p^{c}}^{(1+p^c)^{i-1}}=\zeta_{p^{c}}$.
Compare Formula (5.1) (i.e., the actions of $\beta$ and $\alpha$ on
$K(u_i:1\leq i\leq p^a-1)$) with the action of $\widetilde G$ on
$K(U_i:1\leq i\leq p^a-1)$. They are the same. Hence, according to
Theorem \ref{t5.4}, we get that $K(u_1,\dots,u_{p^a-1})^{G}(u)\cong
K(U_1,\dots,U_{p^a-1})^{\widetilde
G}(U)=K(V_0,V_1,\dots,V_{p^a-1})^{\widetilde G}$ is rational over
$K$. Since by Lemma \ref{l5.5} we can linearize the action of
$\alpha$ on $K(v_i:1\leq i\leq p^a-1)$, we obtain finally that
$K(u_i,v_i:1\leq i\leq p^a-1)^{\langle\beta,\alpha\rangle}$ is
rational over $K$.
\end{proof}

The next step is to investigate Noether's problem for groups of nilpotency class $2$ that do not have
the AEC property. It turns out that the existing methods can not answer this problem entirely. Indeed, results about Noether's problem for such groups are rarely seen in the literature. We managed to discover one series of $p$-groups for which we will give a positive answer to Noether's problem. For any natural number $r$ define
{\allowdisplaybreaks\begin{eqnarray}\label{G}
&&G_r=\langle\alpha_1,\alpha_2,\beta_1,\beta_2,\gamma_1,\gamma_2:[\beta_1,\alpha_1]=\gamma_1=\beta_1^{p^r},[\beta_2,\alpha_2]=\gamma_2=\beta_2^{p^r},[\alpha_1,\alpha_2]=\gamma_2,\\
&&\nonumber\quad\quad\quad\quad\quad\quad\quad\quad\quad\quad\quad\quad  \gamma_1^{p^r}=\gamma_2^{p^r}=\alpha_1^{p^r}=\alpha_2^{p^r}=1\rangle.
\end{eqnarray}}
where all the relations of the form $[x, y] = 1$ between the
generators have been omitted from the list. Thus $\gamma_1,\gamma_2\in Z(G_r)$, i.e., $G_r$ is a
$p$-group of nilpotency class $2$ .

By examining the relations in this group we see that it is well
defined. It is not hard to see also that $G_r$ is not a direct or a
central product of smaller groups. Moreover, $G$ does not
have the AEC property. First, let us verify that the Bogomolov multiplier of $G_r$ is trivial.
\begin{thm}
If $G$ is isomorphic to the group $G_r$ defined by the presentation \eqref{G}, then $B_0(G)=0$.
\end{thm}
\begin{proof}
The group $[G,G^\varphi]$ is generated
modulo $M_0^*(G)$ by
$[\beta_1,\alpha_1^\varphi],[\beta_2,\alpha_2^\varphi]$ and
$[\alpha_1,\alpha_2^\varphi]$. Every element $w\in
[G,G^\varphi]$ can be written as
$w=[\beta_1,\alpha_1^\varphi]^m[\beta_2,\alpha_2^\varphi]^n[\alpha_1,\alpha_2^\varphi]^q\tilde
w$, where $\tilde w\in M_0^*(G)$. This gives
$w^{\kappa^*}=\gamma_1^m\gamma_2^{n+q}$, therefore $w\in M^*(G)$
if and only if $p^r$ divides both $m$ and $n+q$. By Lemma \ref{l2}
we have that {\allowdisplaybreaks\begin{eqnarray*}
&1&=[\alpha_2^\varphi,\alpha_1^{p^r}]\\
&&=[\alpha_2^\varphi,\alpha_1]^{p^r}[\alpha_2^\varphi,\alpha_1,\alpha_1]^{\binom{p^r}{2}}[\alpha_2^\varphi,\alpha_1,\alpha_1,\alpha_1]^{\binom{p^r}{3}}\\
&&=[\alpha_2^\varphi,\beta_2]^{p^r}[\gamma_2^{-1},\alpha_1^\varphi]^{\binom{p^r}{2}}[\gamma^{-1},\alpha_1,\alpha_1^\varphi]^{\binom{p^r}{3}}\\
&&=[\alpha_2^\varphi,\alpha_1]^{p^r},
\end{eqnarray*}}
and similarly,
$[\alpha_1^\varphi,\beta_1]^{p^r},[\alpha_2^\varphi,\beta_2]^{p^r}\in
M_0^*(G)$. It follows that
\begin{equation*}
M^*(G)=\langle[\beta_2,\alpha_2^\varphi][\alpha_1,\alpha_2^\varphi]^{-1}\rangle
M_0^*(G).
\end{equation*}
Note that {\allowdisplaybreaks\begin{eqnarray*}
&&[\beta_2\alpha_2,\alpha_2\alpha_1]=[\beta_2,\alpha_1]^{\alpha_2}[\beta_2,\alpha_2]^{\alpha_1\alpha_2}[\alpha_2,\alpha_1][\alpha_2,\alpha_2]^{\alpha_1}=[\alpha_2,\gamma^{-1}]=1,
\end{eqnarray*}}
hence $[\beta_2\alpha_2,(\alpha_2\alpha_1)^\varphi]\in M_0^*(G)$.
Expanding the latter using the class restriction and Lemma \ref{l1},
we get {\allowdisplaybreaks\begin{eqnarray*}
&[\beta_2\alpha_2,(\alpha_2\alpha_1)^\varphi]&=[\beta_2,\alpha_1^\varphi]^{\alpha_2}[\beta_2,\alpha_2^\varphi]^{\alpha_1^\varphi\alpha_2}[\alpha_2,\alpha_1^\varphi][\alpha_2,\alpha_2^\varphi]^{\alpha_1^\varphi}\\
&&=[\beta_2,\alpha_1^\varphi]^{\alpha_2}[\beta_2,\alpha_2^\varphi][\beta_2,\alpha_2^\varphi,\alpha_1^\varphi\alpha_2][\alpha_2,\alpha_1^\varphi][\alpha_2,\alpha_2^\varphi]^{\alpha_1^\varphi}.
\end{eqnarray*}}
Observe that $[\beta_2,\alpha_1^\varphi]^{\alpha_2},
[\beta_2,\alpha_2^\varphi,\alpha_1^\varphi\alpha_2]$ and
$[\alpha_2,\alpha_2^\varphi]^{\alpha_1^\varphi}$ all belong to
$M_0^*(G)$. Thus we conclude that
$[\beta_2,\alpha_2^\varphi][\alpha_1,\alpha_2^\varphi]^{-1}=[\beta_2,\alpha_2^\varphi][\alpha_2,\alpha_1^\varphi]\in
M_0^*(G)$, as required.
\end{proof}

Finally, we will show that Noether's problem has a positive answer for $G_r$.

\begin{thm}
Let $G$ be isomorphic to the group $G_r$ defined by the presentation \eqref{G}. Assume that $K$ is any field satisfying that
either {\rm (i)} char $K = p>0$, or {\rm (ii)} char $K \ne p$ and
$K$ contains a primitive $p^{2r}$-th root of unity. Then $K(G)$ is
rational over $K$.
\end{thm}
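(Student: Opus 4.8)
The plan is to follow the template of the $2$-generator case treated above: realize $K(G)=K(V)^G$, extract a \emph{faithful monomial} $G$-subspace $W$ by inducing a character of a maximal abelian normal subgroup, reduce to $K(W)^G$ by Theorem~\ref{t5.1}, and then rationalize the resulting monomial action by comparison with metacyclic actions. First, if char $K=p$ then Theorem~\ref{t5.3} gives the result immediately, so I may assume char $K\ne p$ and $\zeta=\zeta_{p^{2r}}\in K$; note the exponent of $G_3$ (and $G_6$) is exactly $p^{2r}$, coming from the $\beta_i$. I take the abelian normal subgroup $H=\langle\beta_1,\beta_2\rangle\simeq C_{p^{2r}}\times C_{p^{2r}}$ of index $p^{2r}$, a faithful character $\chi$ with $\chi(\beta_1)=\chi(\beta_2)=\zeta$, set $Y_{0,0}=\sum_{h\in H}\chi(h)^{-1}x(h)$, and $Y_{i,j}=\alpha_1^i\alpha_2^j\cdot Y_{0,0}$ for $0\le i,j\le p^r-1$. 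A direct commutator computation (using $\beta_1^{\alpha_1}=\beta_1^{1+p^r}$, $\beta_2^{\alpha_2}=\beta_2^{1+p^r}$, $[\beta_1,\alpha_2]=[\beta_2,\alpha_1]=1$, and $[\alpha_1,\alpha_2]=\gamma_2$) gives the monomial action $\beta_1:Y_{i,j}\mapsto\zeta^{1+p^r i}Y_{i,j}$, $\beta_2:Y_{i,j}\mapsto\zeta^{1+p^r j}Y_{i,j}$, $\alpha_1:Y_{i,j}\mapsto Y_{i+1,j}$, and $\alpha_2:Y_{i,j}\mapsto\mu^{-i}Y_{i,j+1}$, where $\mu=\zeta^{p^r}=\zeta_{p^r}$ and indices are read mod $p^r$. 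Since the $\beta_i$ act by primitive $p^{2r}$-th roots of unity, $W=\bigoplus_{i,j}K\cdot Y_{i,j}$ is faithful, and by Theorem~\ref{t5.1} it suffices to prove that $K(Y_{i,j}:0\le i,j\le p^r-1)^G$ is rational over $K$.

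The key observation is that, for each fixed $j$, the pair $\langle\beta_1,\alpha_1\rangle$ acts on $Y_{0,j},\dots,Y_{p^r-1,j}$ exactly as the metacyclic group of Theorem~\ref{t5.4} with $s=1+p^r$: indeed $(1+p^r)^i\equiv 1+ip^r\pmod{p^{2r}}$, so $\zeta^{1+p^r i}=\zeta^{(1+p^r)^i}$, matching the action $\sigma:V_i\mapsto\zeta^{(1+p^r)^i}V_i$, $\tau:V_i\mapsto V_{i+1}$ of that theorem. I therefore pass to the $i$-direction ratios $U_{i,j}=Y_{i,j}/Y_{i-1,j}$ ($1\le i\le p^r-1$), keeping $Y_{0,j}$. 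As in the previous proof, $\beta_1$ becomes the uniform scalar $\mu$ on the $U_{i,j}$, while $\alpha_1$ becomes the cyclic product-inverse action of Lemma~\ref{l5.5}; crucially, the $i$-dependent coupling $\mu^{-i}$ collapses to the \emph{uniform} factor $\mu^{-1}$ on the $U_{i,j}$, which is removed by the constant rescaling $\widehat U_{i,j}=\mu^{-j}U_{i,j}$. On this ``bulk'' field $K(\widehat U_{i,j})$ the operators $\alpha_1$ and $\alpha_2$ now commute (the commutator $\gamma_2=\beta_2^{p^r}$ acts trivially there, since $\beta_2$ does), so the entire nonabelian content of $G$ is confined to the ``boundary row'' $Y_{0,j}$, on which $\langle\beta_2,\alpha_2\rangle$ again realizes a Theorem~\ref{t5.4} metacyclic action and $\gamma_2$ acts by $\mu$.

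To finish I treat the two pieces in turn. The bulk $K(\widehat U_{i,j})^G$ consists of $p^r$ identical metacyclic blocks (one per $j$) permuted cyclically by $\alpha_2$ and scaled uniformly by $\beta_1$: I rationalize each block by Theorem~\ref{t5.4} and Lemma~\ref{l5.5}, and combine over the blocks using the block-permutation equivariance together with Theorem~\ref{t5.1}. Over the resulting rational bulk field $L$, the boundary variables $Y_{0,j}$ ($0\le j\le p^r-1$) transform by $\alpha_2:Y_{0,j}\mapsto Y_{0,j+1}$, $\beta_2:Y_{0,j}\mapsto\zeta^{1+p^r j}Y_{0,j}$ (a Theorem~\ref{t5.4} pair), by $\beta_1:Y_{0,j}\mapsto\zeta Y_{0,j}$, and by $\alpha_1:Y_{0,j}\mapsto U_{1,j}Y_{0,j}$ with $U_{1,j}\in L^\times$. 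I reduce the $j$-direction via $R_j=Y_{0,j}/Y_{0,j-1}$, so that $(\beta_2,\alpha_2)$ reduce to the diagonal/Lemma~\ref{l5.5} form and $\alpha_1$ acts by coefficients $U_{1,j}/U_{1,j-1}\in L^\times$ lying in the already-rational field; the residual action is then affine over that field and is absorbed by Theorem~\ref{t5.1}, with the final single variable $Y_{0,0}$ handled by Theorem~\ref{t5.2}. This yields rationality of $K(G)$ over $K$. The group $G_6$ is identical except that the coupling factor is $\mu_1^{-i}$ (with $\mu_1=\chi(\gamma_1)$) coming from $[\alpha_1,\alpha_2]=\gamma_1$, and the same reductions apply verbatim.

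The step I expect to be the main obstacle is precisely the one forced by the failure of the ABC property: the quotient $G/H\simeq C_{p^r}\times C_{p^r}$ is not cyclic, so no single application of the metacyclic Theorem~\ref{t5.4} suffices, and the commutator relation $[\alpha_1,\alpha_2]=\gamma_2$ introduces the genuine coupling $\mu^{-i}$ between the two directions. The heart of the argument is therefore showing this coupling is harmless: after the $i$-ratio reduction and the constant untwist it disappears entirely from the abelian bulk, while on the low-dimensional boundary row it is confined inside a metacyclic $(\beta_2,\alpha_2)$-pattern to which Theorem~\ref{t5.4} still applies. Keeping the two linearizations equivariant across the directions — so that $\alpha_2$'s block permutation and the $L$-valued $\alpha_1$-coefficients remain compatible with each reduction — is where the careful bookkeeping is needed.
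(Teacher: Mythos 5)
Your overall template (induce a character to obtain a faithful monomial subspace $W$, reduce to $K(W)^G$ via Theorem \ref{t5.1}, pass to ratio variables, and compare with the metacyclic actions of Theorem \ref{t5.4} and Lemma \ref{l5.5}) is the same as the paper's, but your very first step fails: the space $W=\bigoplus_{i,j}K\cdot Y_{i,j}$ you construct is \emph{not} faithful. Indeed, $\gamma_1=\beta_1^{p^r}$ acts on $Y_{i,j}$ by $(\zeta^{1+ip^r})^{p^r}=\zeta^{p^r}=\mu$, and $\gamma_2=\beta_2^{p^r}$ acts by $(\zeta^{1+jp^r})^{p^r}=\mu$ as well, so the nontrivial central element $\gamma_1\gamma_2^{-1}=\beta_1^{p^r}\beta_2^{-p^r}$ (of order $p^r$) acts trivially on every $Y_{i,j}$. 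This is not an accident of the choice $\chi(\beta_1)=\chi(\beta_2)=\zeta$: the center $Z(G_3)=\langle\gamma_1,\gamma_2\rangle\simeq C_{p^r}\times C_{p^r}$ is non-cyclic and lies in $H=\langle\beta_1,\beta_2\rangle$, and all $\alpha_1^i\alpha_2^j$-conjugates of a character of $H$ agree on $Z(G_3)$; hence on any module induced from a single character of $H$ the center acts through one character of $C_{p^r}\times C_{p^r}$, which necessarily has nontrivial kernel. (For the same reason no irreducible representation of $G_3$ can be faithful: a non-cyclic center cannot act faithfully by scalars.) Consequently hypothesis (ii) of Theorem \ref{t5.1} fails for $L=K(Y_{i,j}:0\le i,j\le p^r-1)$, and rationality of $K(Y_{i,j})^G$ --- which is really an invariant field of the proper quotient $G/\langle\gamma_1\gamma_2^{-1}\rangle$ --- would not yield rationality of $K(G)$. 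The same objection applies verbatim to your treatment of $G_6$, whose center is likewise non-cyclic.

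The paper circumvents exactly this obstruction by inducing \emph{two} characters and taking their direct sum: a character of $H_2=\langle\beta_1,\beta_2,\alpha_2\rangle$ with $\beta_1\mapsto\zeta_{p^{2r}}$ and $\langle\beta_2,\alpha_2\rangle$ in its kernel, induced along the powers of $\alpha_1$ (variables $x_0,\dots,x_{p^r-1}$), plus a character of $H_1=\langle\beta_1,\beta_2,\alpha_1\rangle$ with $\beta_2\mapsto\zeta_{p^{2r}}$ and $\langle\beta_1,\alpha_1\rangle$ in its kernel, induced along the powers of $\alpha_2$ (variables $y_0,\dots,y_{p^r-1}$). Each summand is non-faithful, but their kernels intersect trivially, so the $2p^r$-dimensional sum is faithful and Theorem \ref{t5.1} applies. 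This choice also dissolves the difficulty you identify as ``the heart of the argument'': the coupling $[\alpha_1,\alpha_2]=\gamma_2$ appears only as the scalar action $\alpha_1:y_i\mapsto\zeta_{p^r}^i\,y_i$, which becomes the uniform scalar $\zeta_{p^r}$ on the ratios $v_i=y_i/y_{i-1}$ and is then eliminated together with $\beta_2$ via $w_1=v_1^{p^r}$, $w_i=v_i/v_{i-1}$ and Lemma \ref{l5.5}; the two orbits never genuinely interact, so no ``bulk/boundary'' bookkeeping on $p^{2r}$ variables is needed. To salvage your proof you would have to enlarge $W$ by a second orbit in essentially this way before invoking Theorem \ref{t5.1}, at which point you recover the paper's argument.
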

\begin{proof}
Let $V$ be a $K$-vector space whose dual
space $V^*$ is defined as $V^*=\bigoplus_{g\in G}K\cdot x(g)$ where
$G$ acts on $V^*$ by $h\cdot x(g)=x(hg)$ for any $h,g\in G$. Thus
$K(V)^G=K(x(g):g\in G)^G=K(G)$.

Consider the subgroups $H_1=\langle\beta_1,\beta_2,\alpha_1\rangle$
and $H_2=\langle\beta_1,\beta_2,\alpha_2\rangle$ of $G$. Note that
$H_1=\langle\beta_2\rangle\times\langle\beta_1,\alpha_1\rangle\simeq
C_{p^{2r}}\times\langle\beta_1,\alpha_1\rangle$. Hence we get a
linear character of $H_1$ so that $\langle\beta_1,\alpha_1\rangle$
is in the kernel. Explicitly, we may define an action of $H_1$ on
$K\cdot X$ by
\begin{equation*}
\beta_2(X)=\zeta_{p^{2r}}X,~ \beta_1(X)=X,~ \alpha_1(X)=X.
\end{equation*}
Similarly,
$H_2=\langle\beta_1\rangle\times\langle\beta_2,\alpha_2\rangle\simeq
C_{p^{2r}}\times\langle\beta_1,\alpha_1\rangle$. Hence we get a
linear character of $H_2$ so that $\langle\beta_2,\alpha_2\rangle$
is in the kernel. Explicitly, we may define an action of $H_2$ on
$K\cdot Y$ by
\begin{equation*}
\beta_1(Y)=\zeta_{p^{2r}}Y,~ \beta_2(Y)=Y,~ \alpha_2(Y)=Y.
\end{equation*}
Construct the induced representations of these linear characters by
defining $x_i=\alpha_1^i\cdot Y,y_i=\alpha_2^i\cdot X$ for $0\leq
i\leq p^r-1$. Thus we get an action of $G$ on $(\bigoplus_{0\leq
i\leq p^r-1}K\cdot x_i)\oplus(\bigoplus_{0\leq i\leq p^r-1}K\cdot
y_i)$. Since
$\beta_j\alpha_j^i=\alpha_j^i\beta_j\gamma_j^i,\alpha_1\alpha_2^i=\alpha_2^i\alpha_1\gamma_2^i$
and $\alpha_2\alpha_1^i=\alpha_1^i\alpha_2\gamma_2^{-i}$ for $1\leq
j\leq 2,0\leq i\leq p^r-1$, the action of $G$ is given as follows.
{\allowdisplaybreaks
\begin{align*}
\beta_1\ :\ &x_i\mapsto\zeta_{p^{2r}}^{1+ip^r} x_i,~ y_i\mapsto y_i,\\
\beta_2\ :\ &x_i\mapsto x_i,~ y_i\mapsto\zeta_{p^{2r}}^{1+ip^r} y_i,\\
\alpha_1\ :\ &x_0\mapsto x_1\mapsto\cdots\mapsto x_{p^r-1}\mapsto x_0,~ y_i\mapsto\zeta_{p^r}^i y_i,\\
\alpha_2\ :\ &x_i\mapsto x_i,~ y_0\mapsto y_1\mapsto\cdots\mapsto
y_{p^r-1}\mapsto y_0,
\end{align*}}
for $0\leq i\leq p^r-1$. For $1\leq i\leq p^r-1$, define
$u_i=x_i/x_{i-1}$ and $v_i=y_i/y_{i-1}$. Thus $K(x_i,y_i:0\leq i\leq
p^r-1)=K(x_0,y_0,u_i,v_i:1\leq i\leq p^r-1)$ and for every $g\in G$
\begin{equation*}
g\cdot x_0\in K(u_i,v_i:1\leq i\leq p^r-1)\cdot x_0,~ g\cdot y_0\in
K(u_i,v_i:1\leq i\leq p^r-1)\cdot y_0,
\end{equation*}
while the subfield $K(u_i,v_i:1\leq i\leq p^r-1)$ is invariant by
the action of $G$. Thus $K(x_i,y_i:0\leq i\leq
p^r-1)^{G}=K(u_i,v_i:1\leq i\leq p^r-1)^{G}(u,v)$ for some $u,v$
such that $\beta_1(v)=\beta_2(v)=\alpha_1(v)=\alpha_2(v)=v$ and
$\beta_1(u)=\beta_2(u)=\alpha_1(u)=\alpha_2(u)=u$. We have now
{\allowdisplaybreaks \begin{align*}
\beta_1\ :\ &u_i\mapsto\zeta_{p^r} u_i,~ v_i\mapsto v_i,\\
\beta_2\ :\ &u_i\mapsto u_i,~ v_i\mapsto\zeta_{p^r} v_i,\\
\alpha_1\ :\ &u_1\mapsto u_2\mapsto\cdots\mapsto u_{p^r-1}\mapsto (u_1u_2\cdots u_{p^r-1})^{-1},~ v_i\mapsto\zeta_{p^r} v_i,\\
\alpha_2\ :\ &u_i\mapsto u_i,~ v_1\mapsto v_2\mapsto\cdots\mapsto
v_{p^r-1}\mapsto (v_1v_2\cdots v_{p^r-1})^{-1},
\end{align*}}
for $1\leq i\leq p^r-1$. From Theorem \ref{t5.2} it follows that if
$K(u_i,v_i:1\leq i\leq p^r-1)^{G}$ is rational over $K$, so is
$K(x_i,y_i:0\leq i\leq p^r-1)^{G}$ over $K$.

Define $w_1=v_1^{p^r}$ and $w_i=v_i/v_{i-1}$ for $2\leq i\leq
p^r-1$. Then $K(u_i,w_i:1\leq i\leq p^r-1)=K(u_i,v_i:1\leq i\leq
p^r-1)^{\langle\beta_2\rangle}$ and the action of $\alpha_2$ on $w_i
(1\leq i\leq p^r-1)$ is {\allowdisplaybreaks
\begin{align*}
\alpha_2\ :\ &w_1\mapsto w_1w_2^{p^r},\\
&w_2\mapsto w_3\mapsto\cdots\mapsto w_{p^r-1}\mapsto
(w_1w_2^{p^r-1}w_3^{p^r-2}\cdots w_{p^r-1}^2)^{-1}\mapsto\\
&\mapsto w_1w_2^{p^r-2}w_3^{p^r-3}\cdots w_{p^r-2}^2w_{p^r-1}\mapsto
w_2.
\end{align*}}
Define $z_1=w_2,z_i=\alpha_2^i\cdot w_2$ for $2\leq i\leq p^r-1$.
Now the action of $\alpha_2$ is {\allowdisplaybreaks
\begin{align*}
\alpha_2\ :\ &z_1\mapsto z_2\mapsto\cdots\mapsto z_{p^r-1}\mapsto
(z_1z_2\cdots z_{p^r-1})^{-1}.
\end{align*}}
Since $w_1=(z_{p^r-1}z_1^{p^r-1}z_2^{p^r-2}\cdots
z_{p^r-2}^2)^{-1}$, we get that
$K(w_1,\dots,w_{p^r-1})=K(z_1,\dots,z_{p^r-1})$. From Lemma
\ref{l5.5} it follows that the action of $\alpha_2$ on
$K(z_1,\dots,z_{p^2-1})$ can be linearized. Moreover, the actions of
$\beta_1$ and $\alpha_1$ on $K(z_1,\dots,z_{p^2-1})$ are trivial. In
this way we reduce the rationality problem of $K(u_i,v_i:1\leq i\leq
p^r-1)^{G}$ to the rationality of $K(u_i:1\leq i\leq
p^r-1)^{\langle\beta_1,\alpha_1\rangle}$ over $K$. It remains to
repeat the same argument for $\beta_1$ and $\alpha_1$ to complete
the proof.
\end{proof}

\end{document}